\newcommand{\B}{{\mathcal{B}}}
\newcommand{\TRI}{{\scriptscriptstyle {\textrm{TRI}}}}
\newcommand{\PENT}{{\scriptscriptstyle {\textrm{PENT}}}}
\newcommand{\HYP}{{\scriptscriptstyle {\textrm{HYP}}}}
\newcommand{\SDP}{{\scriptscriptstyle {\textrm{SDP}}}}
\newcommand{\BASIC}{{\scriptscriptstyle {\textrm{BASIC}}}}
\newcommand{\PreserveBackslash}[1]{\let\temp=\\#1\let\\=\temp}
\newcolumntype{C}[1]{>{\PreserveBackslash\centering}p{#1}}
\newcolumntype{R}[1]{>{\PreserveBackslash\raggedleft}p{#1}}
\newcolumntype{L}[1]{>{\PreserveBackslash\raggedright}p{#1}}
\def\nd{\noindent}
\DeclareMathOperator{\diag}{diag}
\DeclareMathOperator{\Diag}{Diag}
\DeclareMathOperator{\rank}{rank}
\DeclareMathOperator{\diff}{\mathit{diff}}
\DeclareMathOperator*{\argmin}{arg\,min}
\newtheorem{proposition}{Proposition}
\newtheorem{theorem}{Theorem}
\DeclareFontFamily{OT1}{manual}{}
\DeclareFontShape{OT1}{manual}{m}{n}{ <10> manfnt }{}
\normalfont\fontsize{12}{15}\bfseries}{\thesection}{1em}{}
\normalfont\fontsize{10}{15}\bfseries}{\thesubsection}{1em}{}
\begin{document}

\setlength{\abovedisplayskip}{6pt}
\setlength{\belowdisplayskip}{6pt}

	\begin{center}{\normalsize\bf
	{\large{\texttt{MADAM}}}: A PARALLEL EXACT SOLVER FOR MAX-CUT BASED ON SEMIDEFINITE PROGRAMMING AND ADMM
		}\\[16pt]
		{\bf Timotej Hrga$^{*}$, Janez Povh$^{*,\dag}$}\\[3mm]
		{\small
			 $^{*}$ University of Ljubljana, Faculty of Mechanical Engineering, Slovenia \\
			$^{\dag}$Institute of Mathematics, Physics and Mechanics Ljubljana, Slovenia \\
			{\tt email:} {\tt timotej.hrga@lecad.fs.uni-lj.si, janez.povh@lecad.fs.uni-lj.si} }\\[18pt]
			
	\end{center}

\small{ \noindent \textbf{Abstract}\\ \nd 
We present \texttt{MADAM}, a parallel semidefinite-based exact solver for Max-Cut, a problem of finding the cut with the maximum weight in a given graph. The algorithm uses the branch and bound paradigm that applies the alternating direction method of multipliers as the bounding routine to solve the basic semidefinite relaxation strengthened by a subset of hypermetric inequalities. The benefit of the new approach is a less computationally expensive update rule for the dual variable with respect to the inequality constraints. We provide a theoretical convergence of the algorithm as well as extensive computational experiments with this method, to show that our algorithm outperforms state-of-the-art approaches. Furthermore, by combining algorithmic ingredients from the serial algorithm, we develop an efficient distributed parallel solver based on MPI. 
	}\\[1mm]
	{\small\nd\textbf{Keywords:} semidefinite programming, alternating direction method of multipliers, maximum cut problem, parallel computing}

\begin{section}{Introduction}

\begin{subsection}{Motivation}
The Max-Cut problem is a classical NP-hard optimization problem \cite{garey2002computers, karp1972reducibility} on graphs with the quadratic
objective function and unconstrained binary variables. During the last decades, it has attracted the interest of many researchers, from its
theoretical and algorithmic perspective to its applicability in different fields, for instance mathematics, physics, and computer science 
\cite{krislock2014improved, liers2004computing, rendl2010solving}. Several exact algorithmic approaches have been proposed in the literature, including the BiqMac \cite{rendl2010solving} and BiqCrunch solvers \cite{krislock2017biqcrunch}, which are among the best solvers and are based on semidefinite programming.

Lassere \cite{lasserre2016max} has proved that the Max-Cut problem can be considered as a canonical model of linearly constrained linear and quadratic $0/1$ programs. The transformation is based on the exact penalty approach and was further explored and advanced by Gusmeroli and Wiegele \cite{gusmeroli2019expedis}. 
Even for instances of moderate size, it is considered a computational challenge to solve the Max-Cut to optimality. In practice, we typically solve such problems only approximately by using a heuristic or an approximation algorithm \cite{dunning2018works, goemans1995improved}. However, to compare these algorithms and evaluate their performance, we still require the optimum solutions. 
Considering all this, solving increasingly large instances of Max-Cut to optimality on parallel computers is highly needed in scientific computing. 
\end{subsection}

\begin{subsection}{Problem formulation and notations} 
The central problem that we consider is the Max-Cut problem, which can be defined as follows. For a given undirected graph $G=(V,E)$ on $n = \vert V \vert$ vertices and with edge weights $w_e$ for $e \in E$,  the Max-Cut problem asks to find a bipartition of the vertices such that the sum of the weights of the edges across the bipartition is maximum.  Let $A = (a_{ij})$ denote the weighted adjacency matrix with $a_{ij} = a_{ji} = w_e$ for edge $e = \{i, j\} \in E$ and zero otherwise. 
Encoding the partitions by vectors $z = \{0,1\}^n$, we obtain the following unconstrained binary quadratic optimization problem formulation for Max-Cut:
\begin{equation}
\textrm{maximize } \ \sum_{i,j}^n a_{ij}z_i(1-z_j) = \ z^TL_0z \ \  \textrm{ subject to }\  z \in \{0,1\}^n, \label{maxcut}
\end{equation}
where $L_0$ is the Laplacian matrix of the graph defined by $L_0 = \textrm{ diag}(Ae) - A$. By $e$ we denote the vector of all ones. 
Note that due to the symmetry of the problem, we can fix the last element of vector $z$ to zero and thus remove the last row and column of $L_0$ to obtain the matrix $\widehat{L_0} \in \mathbb{R}^{(n-1) \times (n-1)}$. 

The inner product on the space of symmetric matrices is given by $X \bullet Y = \langle X, Y \rangle = \textrm{tr}(XY) = \sum_{i,j} X_{ij}Y_{ij}$ and  the associated Frobenious norm is defined by $\Vert X \Vert_F = \sqrt{\langle X, X \rangle} = \sqrt{\sum_{i,j}X_{ij}^2}$. 
Using the property of inner product $z^TL_0z = \langle L_0, zz^T \rangle$, we can reformulate problem \eqref{maxcut} as:
\begin{align}
\label{maxcut_matrix}
\begin{split}
\textrm{maximize } \quad&
\begin{bmatrix}
\widehat{L_0} & 0 \\
0 & 0
\end{bmatrix}
\bullet
\begin{bmatrix}
zz^T & z \\
z^T & 1
\end{bmatrix} \\[0.5em]
\textrm{subject to} \quad& z \in \{ 0,1\}^{n-1}.
\end{split}
\end{align}
We increase the dimension of the problem by one, since the last column of the solution matrix of a semidefinite relaxation will be used for determining the next branching variable. 

In the following, we denote the diagonal matrix, which has $v$ on its diagonal, by $\Diag (v)$, and the vector obtained by extracting the main diagonal from the matrix $X$ is denoted by $\diag(X)$. For the given symmetric matrices $A_i$, $i = 1,\ldots,m$, let $\mathcal{A}\colon \mathcal{S}_n \to \mathbb{R}^m$ denote the linear operator mapping $n \times n$ symmetric matrices to $\mathbb{R}^m$ with $\mathcal{A}(X)_i = \langle A_i, X \rangle$. Its adjoint is well known to be $\mathcal{A}^T(y) = \sum_iy_iA_i$. 
For some real number $a$, we denote its nonnegative part by $a_+ = \max\{a, 0\}$. This definition is extended to vectors as follows: if $x \in \mathbb{R}^n$, then $\left( x_+\right)_i = \left( x_i\right)_+$, for $i = 1,\ldots, n$. We denote the projection of some symmetric matrix $X$ onto the positive semidefinite cone by $X_+$ and its projection onto the negative semidefinite cone by $X_-$. More specifically, if the eigendecomposition of $X$ is given by $X = S\Diag(\lambda)S^T$ with the eigenvalues $\lambda \in \mathbb{R}^n$ and orthogonal matrix $S \in \mathbb{R}^{n \times n}$, then we have
\begin{equation*}
X_+ = S\Diag(\lambda_+)S^T, \quad X_- = S\Diag(\lambda_-)S^T.
\end{equation*} 

\end{subsection}

\begin{subsection}{Related work and our contribution}
Over the decades, many methods have been proposed for finding exact solutions of Max-Cut. Some of them are linear programming-based methods \cite{ de1994cutting, liers2004computing}, which work particularly well when the underlying graph is sparse. Other algorithms combine semidefinite programming with the polyhedral approach \cite{helmberg1998solving} to strengthen the basic SDP relaxation with cutting planes. 
Two of them, the BiqMac \cite{rendl2010solving} and BiqCrunch \cite{krislock2017biqcrunch} solvers, respectively, have turned out to be the best performing solvers for Max-Cut in the last decade, also when compared to commercial solvers. Both solvers utilize the branch and bound (B\&B) paradigm. However, the distinction is in using different algorithms to solve the underlying SDP relaxation. Furthermore, they do not use any parallelization.
For other recent computational approaches, the reader is referred to \cite{palagi2012computational}. 

Recently, we have developed a BiqBin solver \cite{BiqBin:2020} with other authors for the class of  binary quadratic problems with linear constraints, which includes the Max-Cut problem. This solver uses the exact penalty approach to reformulate every instance of the binary quadratic problem into an instance of the Max-Cut problem and then solves it using an enhanced version of BiqMac where the underlying SDP relaxations are tight due to the inclusion of hypermetric inequalities. Additionally, the B\&B part of BiqBin has been improved compared to BiqMac, and the solver has been fully parallelized. Extensive numerical evidence shows that BiqBin outperforms BiqMac, BiqCrunch, GUROBI, and SCIP on Max-Cut instances and also on some classes of binary quadratic problems.

The main contribution of this paper is the introduction of the parallel exact solver {\texttt{MADAM}} for Max-Cut, based on the alternating direction method of multipliers (ADMM), which is  introduced in \cite{wen2010alternating}.
More precisely, we:

\begin{itemize}
	\item Adapt ADMM to solve SDP relaxations of Max-Cut, with a subset of hypermetric inequalities included, and provide a convergence proof. More specifically, we strengthen the basic SDP relaxation with triangle, pentagonal, and heptagonal inequalities. The key idea in solving the underlying relaxation is to introduce a slack variable in order to eliminate solving a quadratic program over the nonnegative orthant when computing the dual variable corresponding to inequality constraints. Instead, a sparse system of linear equations is solved and a projection onto the nonnegative orthant is performed.
	\item We propose a rounding procedure which rounds the (nearly feasible) solution of the dual problem obtained by ADMM to feasible solutions and thus provides a valid upper bound for the optimum value of Max-Cut.
	\item We develop a new B\&B algorithm for Max-Cut 
 which exploits the valid upper bounds derived by rounding the nearly feasible solution of ADMM.	
	The B\&B is parallelized based on the load coordinator-worker paradigm using the Message Passing Interface (MPI) library. Scalability results for {\tt MADAM} show good scaling properties when running on a supercomputer. 
	\item We develop a C implementation of all modules and name it {\tt MADAM} -- \texttt{M}ax-Cut \texttt{A}lternating \texttt{D}irection \texttt{A}ugmented Lagrangian \texttt{M}ethod. The source code  is available at 
	$$
	\mathrm{\emph{https://github.com/HrgaT/MADAM}.}
	$$
	{\tt MADAM} is tested against BiqMac, BiqCrunch, and BiqBin on several Max-Cut instances from the BiqMac library. Numerical results show that {\tt MADAM} outperforms other approaches. The reason is that our adaption of ADMM has a better balance between the time needed to solve the SDP relaxation and the quality of the solution than other solvers.

\end{itemize}

The main difference between {\tt MADAM}, BiqMac, and BiqBin is that {\tt MADAM} applies ADMM to solve the underlying SDP relaxations, while BiqMac and BiqBin use the bundle method.
BiqMac strengthens the basic SDP relaxation with triangle inequalities, whereas 
BiqBin and {\tt MADAM} also include a subset of pentagonal and heptagonal inequalities, which make these relaxations harder to solve.
Using ADMM in {\tt MADAM} implies that in each B\&B node, we have a (nearly) feasible solution for the SDP relaxation available, which is not the case with the bundle method applied to the partial Lagrangian dual.  
This gives us more information on how to generate feasible solutions and how to branch. 
Furthermore, the obtained SDP bounds are tighter,
and consequently the size of the explored B\&B tree is smaller compared to other solvers. 
Parallelization of B\&B  in {\tt MADAM} is very similar to the parallelization of B\&B  in BiqBin, since both are based on the load coordinator-workers paradigm and use the
MPI library for communication between the processes. 

The rest of this paper is organized as follows. Section 2 deals with semidefinite relaxations of the Max-Cut used in this paper. 
We also describe the class of cutting planes that we use. 
The description of other solution approaches that are based on semidefinite programming is given in Section 3. 
 In Section 4, we outline our new ADMM algorithm, discuss implementation details, and provide a proof of convergence. Details on how the proposed ADMM method is used within the B\&B framework are given in Section 5, while Section 6 presents numerical results of the serial algorithm. In Section 7, we give insights on how parallelization improves the performance of the {\texttt{MADAM}} solver. We conclude
with a computational experience of our method and present scalability results. 

\end{subsection}

\end{section}

\begin{section}{Semidefinite relaxation of Max-Cut}
The semidefinite program (SDP) optimizes a linear objective function of the matrix over the intersection of the cone of positive semidefinite matrices with an affine space \cite{wolkowicz2012handbook}. Compared to linear programming, SDP relaxations have been shown to provide tighter bounds on the optimum value for many combinatorial optimization problems. Particularly for the Max-Cut problem, Goemans and Williamson \cite{goemans1995improved} achieve the celebrated $0.879$ approximation ratio using the SDP relaxation. 

Since we will consider the semidefinite relaxation of \eqref{maxcut_matrix}, it is more appropriate to work with $-1/1$ variables. We  apply the change of variables between $z \in \{ 0,1\}^{n-1}$ and $x \in \{ -1,1\}^{n-1}$, defined by $z = \frac{1}{2}(x+e)$. This can be written in matrix form as 
\begin{equation*}
\begin{bmatrix}
z\\[0.2em]
1
\end{bmatrix}
=
\begin{bmatrix}
\frac{1}{2}I & \frac{1}{2}e\\[0.2em]
0 & 1
\end{bmatrix}
\begin{bmatrix}
x\\[0.2em]
1
\end{bmatrix}
= U
\begin{bmatrix}
x\\[0.2em]
1
\end{bmatrix}
\end{equation*}
and therefore
\begin{equation*}
\begin{bmatrix}
zz^T & z \\[0.2em]
z^T & 1
\end{bmatrix}
= U
\begin{bmatrix} xx^T & x \\[0.2em] x^T & 1 \end{bmatrix}
U^T.
\end{equation*}
Observe that for any $x = \{-1,1\}^{n-1}$, the matrix $xx ^T$ is positive semidefinite and its diagonal is equal to the vector of all ones. Using the above transformation, this allows us to write the Max-Cut problem in $-1/1$ variables as
\begin{align*}
\textrm{maximize } \quad& \langle L, X \rangle \\
\textrm{subject to} \quad&\diag (X) = e \\
\quad& X = \begin{bmatrix} xx^T & x \\[0.2em] x^T & 1 \end{bmatrix}
\end{align*}
or equivalently
\begin{align}\label{MC} 
\tag{MC}
\begin{split}
\textrm{maximize } \quad& \langle L, X \rangle \\
\textrm{subject to} \quad&\diag (X) = e \\
\quad& X \succeq 0\\
\quad& \rank(X) = 1,
\end{split}
\end{align}
where
\begin{equation*}
L = 
U^T \begin{bmatrix}
\widehat{L_0} & 0 \\[0.2em]
0 & 0
\end{bmatrix} U =
\frac{1}{4}
\begin{bmatrix} \widehat{L_0} &\widehat{L_0}e \\[0.2em] e^T\widehat{L_0} & e^T \widehat{L_0}e \end{bmatrix} \in \mathbb{R}^{n \times n}.
\end{equation*}
By dropping the rank-one constraint, we obtain the basic SDP relaxation
\begin{align}\label{basic_SDP} 
\tag{$\textrm{MC}_{\BASIC}$}
\begin{split}
\textrm{maximize } \quad& \langle L, X \rangle \\
\textrm{subject to} \quad&\diag (X) = e \\
\quad& X \succeq 0.
\end{split}
\end{align}

Interior-point methods (IPM) turned out to be the most prominent algorithms for solving SDPs. There exist several numerical packages to solve semidefinite programs, \emph{e.g.}, CSDP \cite{borchers1999csdp}, MOSEK \cite{mosek2015mosek}, SeDuMi \cite{sturm1999using}, and SDPT3 \cite{toh1999sdpt3}. Many of them are based on some variant of the primal-dual path following interior-point method, see for example \cite{helmberg1996interior}. 

Despite their expressive power, we are in practice limited to solving  small and middle-size SDPs. As the size of problems and number of constraints grow, interior-point methods show poor scalability for large-scale problems. However, for special cases like the relaxation \eqref{basic_SDP}, the interior-point method tailored to this problem scales well, since the matrix determining the Newton direction can be efficiently constructed \cite{helmberg1996interior}. 

The bound from \eqref{basic_SDP} is not strong enough to be successfully used  within a branch and bound framework in order to solve larger Max-Cut problems to optimality. By adding additional equality or inequality constraints known as cutting planes, which are valid for all feasible solutions of \eqref{MC}, we strengthen the upper bound. One such class of cutting planes, called triangle inequalities, is obtained as is shown below. Observe that for an arbitrary triangle with vertices $i < j < k$ in the graph, any partition of vertices cuts either $0$ or $2$ of its edges. Moving to the $\{-1,1\}$ model, this leads to 
\begin{align*}
-X_{ij}-X_{ik}-X_{jk} \le 1, \\
-X_{ij}+X_{ik}+X_{jk} \le 1, \\
X_{ij}-X_{ik}+X_{jk} \le 1, \\
X_{ij}+X_{ik}-X_{jk} \le 1,
\end{align*}
(see \cite{helmberg1998solving} and \cite{rendl2010solving}). The inequalities are collected as $\B_{\TRI}(X) \le e$ and we end up with the following SDP relaxation:
\begin{align}\label{streng_SDP}
\tag{$\textrm{MC}_\TRI$}
\begin{split}
\textrm{maximize } \quad& \langle L, X \rangle \\
\textrm{subject to} \quad&\diag (X) = e \\
\quad& \B_{\TRI} (X) \le e\\
\quad& X \succeq 0.
\end{split}
\end{align}

To exploit a stronger relaxation of \eqref{streng_SDP}, the bound is further strengthened by using pentagonal and heptagonal inequalities, which belong to the family of hypermetric inequalities, valid for any matrix $X$ from the convex hull of rank-one matrices $xx^T$, for $x \in \{-1,1\}^n$. For every integer vector $b$ such that $e^Tb$ is odd, the inequality $\vert x^Tb \vert \ge 1$ holds for all $x \in \{-1,1\}^n$. Therefore, we have $\left\langle bb^T, xx^T\right\rangle \ge 1$. The hypermetric inequality specified by the vector $b$ is the inequality
$$
\left\langle bb^T, X\right\rangle \ge 1.  
$$
In \texttt{MADAM}, we consider the subset of hypermetric inequalities generated by choosing $b$ with $b_i \in \{-1,0,1\}$ and by fixing the number of non-zero entries to 3, 5, or 7. These correspond to triangle, pentagonal, and heptagonal inequalities, respectively, and we collect them as $\B(X) \le e$. We obtain the following strengthening of \eqref{streng_SDP}:
\begin{align}\label{streng_hyp_SDP}
\tag{$\textrm{MC}_\HYP$}
\begin{split}
\textrm{maximize } \quad& \langle L, X \rangle \\
\textrm{subject to} \quad&\diag (X) = e \\
\quad& \B(X) \le e\\
\quad& X \succeq 0.
\end{split}
\end{align}

There is a very large number of triangle, pentagonal, and heptagonal inequalities in \eqref{streng_hyp_SDP}. Solving this relaxation  directly is intractable even for moderate values of $n$. In \texttt{MADAM}, we iteratively identify a subset of the proposed cutting planes
by separating the most violated inequalities using the current approximate solution. 
There are $4\binom{n}{3}$ triangle inequalities, but for a given $X$, we can enumerate all of them and identify the most violated ones. 
Due to a large number of pentagonal and heptagonal inequalities, the computational overhead to evaluate all of them is prohibitive for larger instances and the separation has to be done heuristically.
We use the idea proposed in BiqBin,
where the separation problem is reformulated as a quadratic assignment problem of the form
\begin{align*}
\textrm{minimize } \quad& \langle H, PXP^T \rangle \\
 \quad& P \in \Pi,
\end{align*}
where $\Pi$ is the set of all $n \times n$ permutation matrices and $H$ determines the type of pentagonal or heptagonal inequality specified by the vector $b$. The problem is then approximately solved by using simulated annealing to obtain inequality with a potentially large violation. The reader is referred to \cite{BiqBin:2020} for more details.

\end{section}

\begin{section}{Other solution approaches based on semidefinite programming}
In this section, we summarize other approaches for solving the Max-Cut problem to optimality, especially the BiqMac \cite{rendl2010solving}, BiqBin \cite{BiqBin:2020}, and BiqCrunch \cite{krislock2017biqcrunch} solvers, which are widely considered as the most powerful solvers for unconstrained binary quadratic  problems. 

\begin{subsection}{BiqMac and BiqBin} 
The starting point of BiqMac is the strengthened SDP relaxation \eqref{streng_SDP}.
By dualizing only the triangle inequality constraints, the authors obtain the convex nonsmooth partial dual function
$$
f(\gamma) = e^T\gamma + \max_{\substack{\textrm{diag}(X) \ = \ e,\\ X \succeq 0}}\langle L - \mathcal{B}_{\TRI}^T(\gamma), X \rangle,
$$
where $\gamma$ is the nonnegative dual variable. 
Evaluating the dual function and computing a subgradient amounts to solving an SDP of the form \eqref{basic_SDP}, which can be efficiently computed using an interior-point method tailored to this problem.  The approximate minimizer of the dual problem
\begin{align*}
\textrm{minimize } \quad& f(\gamma) \\
\textrm{subject to} \quad& \gamma \ge 0
\end{align*}
is then computed using the bundle method \cite{kiwiel1989survey}. In BiqBin, the bound is further strengthened by also dualizing pentagonal and heptagonal inequalities, i.e.\ the bundle method is used to minimize the partial dual function of \eqref{streng_hyp_SDP}. 

\end{subsection}

\begin{subsection}{BiqCrunch}
Due to the diagonal constraint in \eqref{streng_SDP}, it can easily be proven that all feasible matrices of this problem satisfy 
the condition $n^2 - \Vert X \Vert_F^2 \ge 0$, in which the equality holds if and only if the matrix has rank 1. The idea of BiqCrunch is to add a multiple of this quadratic regularization term to the objective function. They obtain the following regularized problem:
\begin{align}\label{regularized_SDP}
\begin{split}
\textrm{maximize } \quad& \langle L, X \rangle +\frac{\alpha}{2}\left(n^2 - \Vert X \Vert_F^2\right) \\
\textrm{subject to} \quad&\diag (X) = e \\
\quad& \B_{\TRI} (X) \le e\\
\quad& X \succeq 0.
\end{split}
\end{align}
By using quadratic regularization, the simplified dual problem
\begin{align*}
\textrm{minimize } \quad& \frac{1}{2\alpha}\left\Vert \left[\Diag(y) + \B_{\TRI}^T(z) - L \right]_+\right\Vert_F^2 + e^Ty + e^Tz + \frac{\alpha}{2}n^2\\[0.5em]
\textrm{subject to} \quad& y \textrm{ free}, \ z \ge 0
\end{align*}
produces a family of upper bounds dependent on the penalty parameter $\alpha$. Note that the objective function is convex and differentiable, see \cite{krislock2014improved}. The function value and gradient are evaluated by computing partial spectral decomposition. This enables that for a fixed $\alpha$, the dual function is minimized using the L-BFGS-B algorithm \cite{byrd1995limited} from the family of quasi-Newton methods. Reducing the penalty parameter $\alpha$ increases the tightness of the upper bound. This is due to the decreased impact of the regularization term in the primal objective function of \eqref{regularized_SDP}. 
\end{subsection}

\end{section}	

\begin{section}{Alternating direction method of multipliers}
Several algorithmic alternatives to interior-point methods have been proposed in the literature to solve semidefinite programs \cite{burer2003nonlinear, fischer2006computational, malick2009regularization}. The alternating direction method of multipliers (ADMM) has been studied over the last decades due to its wide range of applications in a number of areas and its capability of solving large-scale problems \cite{boyd2011distributed, povh2006boundary, wen2010alternating}. ADMM is an algorithm that solves convex optimization problems by breaking them into smaller pieces, which are easier to handle. This is achieved by an alternating optimization of the augmented Lagrangian function. 
The  method deals with the following problems with only two blocks of functions and variables:
\begin{align}\label{admm2block}
\begin{split}
\textrm{minimize }  \quad& f_1(x_1) + f_2(x_2) \\ 
\textrm{ subject to } \quad& A_1x_1 + A_2x_2 = b \\
\quad& x_1 \in \mathcal{X}_1, \  x_2 \in \mathcal{X}_2. 
\end{split}
\end{align}
Let 
$$
\mathcal{L}_{\rho}(x_1,x_2,y) = f_1(x_1) + f_2(x_2) + y^T\left(  A_1x_1 + A_2x_2 - b \right) + \frac{\rho}{2}\left\Vert  A_1x_1 + A_2x_2 - b \right\Vert_2^2
$$
be the augmented Lagrangian function of \eqref{admm2block} with the Lagrange multiplier $y$ and a penalty parameter $\rho > 0$. Then ADMM consists of iterations
\begin{align}\label{admm2block_iter}
\begin{split}
x_1^{k+1} &= \argmin_{x_1 \in \mathcal{X}_1} \mathcal{L}_{\rho}(x_1,x_2^k, y^k)\\[0.2em]
x_2^{k+1} &= \argmin_{x_2 \in \mathcal{X}_2} \mathcal{L}_{\rho}(x_1^{k+1},x_2, y^k)\\[0.2em]
y^{k+1} &= y^k + \rho\left( A_1x_1^{k+1} + A_2x_2^{k+1} - b \right).
\end{split}
\end{align}
For details the reader is referred to \cite{boyd2011distributed} and references therein. 

ADMM has already been successfully applied for solving semidefinite relaxations of combinatorial optimization problems. 
The Boundary Point Method \cite{povh2006boundary} is an augmented Lagrangian method applied to the dual SDP in standard form. In the implementation, alternating optimization is used since only one iteration is performed in the inner loop. The method is currently one of the best algorithms for computing the theta number  of a graph \cite{lovasz1979shannon}. However, the drawback of the method is that it can only solve equality constrained semidefinite programs. Wen et~al.\ \cite{wen2010alternating} present ADMM for general semidefinite programs with equality and inequality constraints.  Compared to our approach, the drawback of their method is in solving the quadratic program over the nonnegative orthant when computing the dual variable corresponding to inequality constraints. Furthermore, they use multi-block ADMM for which the proof of convergence is not presented. 

\begin{subsection}{ADMM method for \eqref{streng_hyp_SDP}}
By introducing the slack variable $s$,  the problem \eqref{streng_hyp_SDP} is equivalent to
\begin{align}\label{streng_SDP_slack}
\begin{split}
\textrm{maximize } \quad& \langle L, X \rangle \\
\textrm{subject to} \quad&\diag (X) = e \\
\quad& \B (X) +s =  e\\
\quad& X \succeq 0, \ s \ge 0.
\end{split}
\end{align}
One can easily verify that its dual problem can be written as
\begin{align} \label{streng_SDP_slack_dual}
\begin{split}
\textrm{minimize }  \quad& e^Ty + e^Tt \\ 
\textrm{ subject to } \quad& L-\Diag(y) -\B^T(t) + Z = 0 \\ 
\quad& u-t = 0  \\
\quad& y, t \textrm{ free}, \ Z \succeq 0, \ u \ge 0, 
\end{split}
\end{align}
where $y$ and $t$ are dual variables associated respectively with the equality constraints, whereas $u$ and $Z$ are dual multipliers to the conic constraints.  

Let $m$ denote the number of hypermetric inequalities considered in \eqref{streng_hyp_SDP}. For fixed $\rho > 0$, consider the augmented Lagrangian $L_{\rho}$ for \eqref{streng_SDP_slack_dual}:
\begin{align*}
L_{\rho}(y,t,Z,u;X,s) =&  \ e^Ty + e^Tt  + \langle L-\Diag(y)-\B^T(t)+Z, X \rangle \ +\\[0.5em]
&\ \frac{\rho}{2}\left\Vert L-\Diag(y)-\B^T(t)+Z\right\Vert^2 _F+  \langle u-t, s \rangle  + \frac{\rho}{2}\left\Vert u-t\right\Vert^2 _2\\[0.5em]
=& \ e^Ty + e^Tt  + \frac{\rho}{2}\left\Vert L-\Diag(y)-\B^T(t)+Z + X/ \rho\right\Vert^2 _F \ +\\[0.5em]
& \ \frac{\rho}{2}\left\Vert u-t + s/ \rho \right\Vert^2 _2 - \frac{1}{2\rho}\left\Vert X \right\Vert_F^2 - \frac{1}{2\rho}\left\Vert s  \right\Vert^2_2.
\end{align*}
The alternating direction method of multipliers for problem \eqref{streng_SDP_slack_dual} consists of the alternating minimization of $L_{\rho}$ with respect to one variable while keeping others fixed to get $y$, $Z$, $t$, and $u$. Then the primal variables $X$ and $s$ are updated using the following rules:
\begin{subequations}\label{admm_maxcut}
\begin{align}
y^{k+1} &= \argmin_{y \in \mathbb{R}^n} \ L_{\rho}(y,t^k,Z^k,u^k;X^k,s^k) \label{y}\\[0.5em]
t^{k+1} &= \argmin_{t  \in \mathbb{R}^m} \ L_{\rho}(y^{k+1},t,Z^k,u^k;X^k,s^k)\label{t}\\[0.5em]
Z^{k+1} &= \argmin_{Z \succeq 0} \ L_{\rho}(y^{k+1},t^{k+1},Z,u^k;X^k,s^k)\label{Z} \\[0.5em]
u^{k+1} &= \argmin_{s  \in \mathbb{R}^m_+} \ L_{\rho}(y^{k+1},t^{k+1},Z^{k+1},u;X^k,s^k)\label{u}\\[0.5em]
X^{k+1} &=  X^k + \rho\left(L-\Diag(y^{k+1})-\B^T(t^{k+1}) + Z^{k+1} \right)\label{X}\\[0.5em]
s^{k+1} &= s^k + \rho \left(u^{k+1}-t^{k+1}\right). \label{s}
\end{align}
\end{subequations}
Let us closely look at the subproblems \eqref{y}--\eqref{s}. 
The first order optimality condition for problem \eqref{y} is
$$
\nabla_y L_{\rho} = e +\rho y - \rho \diag(L-\B^T(t) + Z +X/ \rho) = 0.
$$
Hence the computation of $y$ is trivial
\begin{equation*}
y = -\frac{e}{\rho} + \diag(L -\B^T(t)+Z+ X/\rho). \label{update_y}
\end{equation*}
Similarly for \eqref{t}, we compute the gradient with respect to $t$
\begin{equation*}
\nabla_t L_{\rho} = e - s - \rho (u-t) - \rho \B(L-\Diag(y) - \B^T(t) + Z + X/\rho) = 0.
\end{equation*}
Therefore, $t$ is the solution of the following linear system 
\begin{equation*}
(\B\B^T + I)t = \B(L-\Diag(y)+Z+X/\rho) +u + \frac{s-e}{\rho}. \label{BBT}
\end{equation*}
Note that the $\B\B^T +  I$ matrix is sparse and positive definite. Hence the above linear system can be efficiently solved using the sparse Cholesky factorization. 

By defining $M = L - \Diag(y) - \B^T(t) + X/\rho$, the subproblem \eqref{Z} can be formulated as
\begin{equation*}
\argmin_{Z \succeq 0} \left\Vert M+Z \right\Vert_F^2,
\end{equation*}
where the solution 
\begin{equation}
Z = (-M)_+ = -M_{-}
\end{equation}
is the projection of the $-M$ matrix onto the positive semidefinite cone. 

The subproblem \eqref{u} can be written as
\begin{equation*}
\argmin_{u \ge 0} \left\Vert u - t + s/\rho \right\Vert_2^2.
\end{equation*}
It asks for the nonnegative vector that is closest to $v = t -s/\rho$.  Hence the solution is 
\begin{equation*}
u = \max(0, v) = v_+,
\end{equation*}
the nonnegative part of vector $v$.

After all variables for the dual problem \eqref{streng_SDP_slack_dual} have been updated in alternating manner, we compute primal variables $X$ and $s$ using the expressions \eqref{X} and \eqref{s}. As already observed in the Boundary Point Method \cite{povh2006boundary}, the update rule for $X$ can be simplified and also computed from the spectral decomposition of matrix $M$. Using \eqref{X}, we get
\begin{align*}
X^{k+1} &= X^k + \rho\left(L - \Diag(y^{k+1}) - \B^T(t^{k+1}) + Z^{k+1}\right) \\
&= \rho \left(L - \Diag(y^{k+1}) - \B^T(t^{k+1})  + X^k/\rho + Z^{k+1}\right) \\
&= \rho(M - M_-) = \rho M_+.
\end{align*}
Similarly, we can simplify the formula for \eqref{s}
\begin{align*}
s^{k+1} &= s^k + \rho (u^{k+1}-t^{k+1}) \\
&= \rho \left( u^{k+1} - (t^{k+1} - s^k/\rho)\right)\\
 &= \rho (v_+ - v) = -\rho v_-.
\end{align*}
Hence by construction, the matrix $X$ is positive semidefinite and the vector $s$ is nonnegative. 

The overall complexity of one iteration of the method is solving a linear system with matrix $\B\B^T + I$ and computing the partial eigenvalue decompostion of $M$. Compared to interior-point methods where the coefficient matrix changes in each iteration, matrix $\B\B^T + I$ remains constant throughout the algorithm and its factorization can be cached at the beginning to efficiently solve the linear system in each iteration. 

The difference between Algorithm \ref{admm_alg}  and the method proposed by Wen et~al.\ \cite{wen2010alternating} lies in the update rule of the dual variable $t$ corresponding to inequality constraints. The authors directly apply ADMM on the dual of \eqref{streng_hyp_SDP}. In our approach, we introduce the slack variable $s$ resulting in an unconstrained optimization problem for variable $t$ in \eqref{t}. This reduces the overall complexity of the algorithm by eliminating the need to solve a convex quadratic program of order $m$ over the nonnegative orthant, especially since multiple hypermetric inequalities are added to strengthen the bound. Instead, a sparse system of linear equations is solved and a projection onto the nonnegative orthant is used. 

\end{subsection}

\begin{subsection}{Implementation}
The above update rules ensure that during the algorithm, nonnegativity of vectors $u$ and $s$, and conic constraints for matrices $X$ and $Z$ are maintained, as well as complementarity conditions $u^Ts = 0$ and $ZX = 0$. Hence, once primal and dual feasibility are reached, the method converges to the optimal solution.
To measure the accuracy of primal and dual feasibility, we use
\begin{align*}
r_P &= \frac{\Vert \diag(X) - e\Vert_2 + \Vert \max\left(\B(x) - e, 0\right)\Vert_2}{1 + \sqrt{n}},\\[0.5em]
r_D &= \frac{\Vert L - \Diag(y) - \B^T(t) + Z \Vert_F + \Vert u - t \Vert_2 }{1 + \Vert L \Vert_F}.
\end{align*}
We terminate our algorithm when
$
\max\{r_P,  r_D\} < \varepsilon,
$
for prescribed tolerance $\varepsilon > 0$. 

The performance of the method is dependent on the choice of the penalty parameter $\rho$.  Numerical experiments show that for the problems we consider, the starting value of $\rho = 1$ or $\rho = 1.6$ is a good choice and the value is dynamically tuned during the algorithm in order to improve the practical convergence. A simple strategy to adjust the value of $\rho$ is observing the residuals: 
\begin{align}
\label{update_ro}
\begin{split}
\rho^{k+1} = 
\begin{cases} 
      \tau\rho^k & \textrm{ if } \log\left(\frac{r_D}{r_P}\right)> \mu\\[0.5em]
      \rho^k/\tau & \textrm{ if } \log\left(\frac{r_P}{r_D}\right)> \mu \\[0.5em]
      \rho^k & \textrm{ otherwise,}
   \end{cases}
   \end{split}
\end{align}
for some parameters $\mu$ and $\tau$. In our numerical tests, we use $\mu = 0.5$ and $\tau = 1.001$. The idea behind this penalty parameter update scheme is trying to keep the primal and dual residual norms in the same order of magnitude as they both converge to zero. 

The computational time of our ADMM method is essentially determined by the number of partial eigenvalue decompositions and the efficiency of the sparse Cholesky solver, since these are the most computationally expensive steps. For obtaining positive eigenvalues and corresponding eigenvectors, we use the LAPACK \cite{anderson1999lapack} routine \texttt{DSYEVR}.  For factoring the $\B\B^T+I$ matrix and then performing backsolves to get $t$, we use the sparse direct solver from CHOLMOD \cite{chen2008algorithm}, a high performance library for the sparse Cholesky factorization. CHOLMOD is part of the SuiteSparse linear algebra package \cite{davis2011university}. 

\end{subsection}

\vspace{1em}
In the following two subsections, we elaborate on two potential issues when using ADMM, and how to resolve them. These are obtaining a safe upper bound, which can be used within the B\&B algorithm, and the convergence of multi-block ADMM. 

\begin{subsection}{Safe upper bound}
To safely use the proposed upper bound within the B\&B algorithm, we need a certificate that the value of the dual function is indeed a valid upper bound for the original problem \eqref{maxcut}. 
To achieve this, the quadruplet $(y,t,Z,u)$ has to be dual feasible, i.e.\ by assigning $t \leftarrow u$, the equation $$L - \Diag(y) - \B^T(u) + Z = 0$$ has to be satisfied. 
However, since we only approximately solve the primal-dual pair of semidefinite programs to some precision, the dual feasibility is not necessarily reached when the algorithm terminates. Note that variable $y$ is unconstrained, whereas the conic conditions on $u$ and $Z$ are satisfied by construction. In the following, we describe the post-processing step we do after each computation of the bound.

\begin{proposition}\label{proposition_safe_bound}
Let $y$, $t$, $u$, and $Z$ be the output variables computed with iteration scheme \eqref{y}~--~\eqref{s}.  Let $\lambda_{\min}$ denote the smallest eigenvalue of matrix $\hat Z:= \Diag(y) + \B^T(u) - L$. If $\lambda_{\min} \ge 0$, then the value $e^Ty + e^Tu$ is a valid upper bound for problem \eqref{maxcut}. Otherwise the value $e^T\hat{y} + e^Tu$, where $\hat{y} = y -\lambda_{\min} e$, provides an upper bound on the largest cut. Furthermore, it always holds that the value $e^T\hat{y} + e^Tu$ is larger than $e^Ty + e^Tu$. 
\end{proposition}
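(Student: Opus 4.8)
The plan is to establish the claim purely from weak SDP duality, by exhibiting in each case a genuinely feasible point of the dual \eqref{streng_SDP_slack_dual} whose objective attains the stated value. First I would recall the chain of relaxations: \eqref{streng_hyp_SDP} is a relaxation of \eqref{MC}, hence of \eqref{maxcut}, so its optimal value already upper-bounds the maximum cut, and \eqref{streng_SDP_slack_dual} is its Lagrangian dual, a minimization problem. By weak duality, the objective $e^Ty + e^Tt$ evaluated at any point satisfying all constraints of \eqref{streng_SDP_slack_dual} is at least the optimal value of \eqref{streng_hyp_SDP}, and is therefore a valid upper bound for \eqref{maxcut}. The whole argument then reduces to producing such a feasible point.

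Next I would check exactly which dual constraints the ADMM output already satisfies once we set $t \leftarrow u$. The coupling constraint $u - t = 0$ then holds trivially, while $u \ge 0$ and $Z \succeq 0$ hold by construction of the iterations \eqref{Z} and \eqref{u}. The only remaining constraint is the linear equation $L - \Diag(y) - \B^T(u) + Z = 0$, which forces $Z = \Diag(y) + \B^T(u) - L = \hat{Z}$. Thus the single obstruction to dual feasibility is the conic condition $\hat{Z} \succeq 0$, i.e.\ $\lambda_{\min} \ge 0$. This immediately yields the first claim: when $\lambda_{\min} \ge 0$, the quadruplet $(y, u, u, \hat{Z})$ is dual feasible, and weak duality gives the upper bound $e^Ty + e^Tu$.

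For the case $\lambda_{\min} < 0$ the key step is a spectral shift of the free variable $y$. Setting $\hat{y} = y - \lambda_{\min} e$ and using $\Diag(\lambda_{\min} e) = \lambda_{\min} I$, I would compute $\Diag(\hat{y}) + \B^T(u) - L = \hat{Z} - \lambda_{\min} I$, whose eigenvalues are $\lambda_i - \lambda_{\min} \ge 0$; hence this shifted matrix is positive semidefinite and $(\hat{y}, u, u, \hat{Z} - \lambda_{\min} I)$ is dual feasible, so $e^T\hat{y} + e^Tu$ is a valid upper bound. Finally, monotonicity follows from $e^T\hat{y} = e^Ty - \lambda_{\min}\, e^Te = e^Ty - n\lambda_{\min}$: since $\lambda_{\min} < 0$ in this branch, $-n\lambda_{\min} > 0$ and the corrected bound strictly exceeds $e^Ty + e^Tu$.

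The proof is short, and the only place demanding care is the bookkeeping in the spectral shift: verifying that adding $-\lambda_{\min} e$ to the diagonal-defining variable $y$ translates precisely into adding $-\lambda_{\min} I$ to $\hat{Z}$, so that every eigenvalue rises by $-\lambda_{\min}$ and the smallest becomes exactly zero. A secondary point worth stating explicitly is the weak-duality chain itself, namely that feasibility of the constructed point in \eqref{streng_SDP_slack_dual} bounds the relaxation \eqref{streng_hyp_SDP}, which in turn bounds \eqref{maxcut}; without articulating this chain, the phrase ``valid upper bound for \eqref{maxcut}'' would not be justified.
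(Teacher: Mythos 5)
Your proof is correct and follows essentially the same route as the paper's: set $t \leftarrow u$, observe that the only possibly violated dual constraint forces $Z = \hat Z$, and when $\lambda_{\min} < 0$ repair positive semidefiniteness by the shift $\hat Z \leftarrow \hat Z - \lambda_{\min} I$ compensated through $\hat y = y - \lambda_{\min} e$, with the bound increasing by $-n\lambda_{\min} > 0$. The only difference is that you spell out the weak-duality chain from \eqref{streng_SDP_slack_dual} back to \eqref{maxcut} explicitly, which the paper leaves implicit.
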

\begin{proof}
After the ADMM method terminates, and by assigning $t \leftarrow u$, the quadruplet $(y,t,Z,u)$ satisfies all the constraints of \eqref{streng_SDP_slack_dual} within machine accuracy, only the linear constraint
$$
L - \Diag(y) - \B^T(u) +Z=0
$$
may not be satisfied.
By replacing $Z$ with $\hat Z$, we satisfy this linear constraint, but the positive semidefiniteness of $\hat Z$ might be violated.
 If the smallest eigenvalue $\lambda_{\min}$  of $\hat Z$ is nonnegative, then the new quadruplet $(y,t,\hat{Z},u)$ is feasible for \eqref{streng_SDP_slack_dual} and the $e^Ty + e^Tu$ value provides an upper bound on the size of the largest cut. 
 
If $\lambda_{\min}$  is negative,  we adjust the matrix $\hat Z$ to be positive semidefinite by using
$$
\hat Z \leftarrow \hat Z -\lambda_{\min}  I.
$$
To maintain dual equality constraints, we correct the unconstrained variable as $\hat{y} = y - \lambda_{\min}  e$. It is obvious that the value $e^T\hat{y} + e^Tu = e^Ty + e^Tu - n\lambda_{\min}$ is larger than $e^Ty + e^Tu$. 
\end{proof}

We summarize the ADMM-based algorithm for solving the SDP relaxation \eqref{streng_hyp_SDP} in Algorithm \ref{admm_alg}.

\begin{algorithm}[h]
	\footnotesize
	\caption{\small ADMM method for solving SDP relaxation \eqref{streng_hyp_SDP} of Max-Cut }
	\label{admm_alg}
	\KwOut{Upper bound on the largest cut of a graph.} 
	Select $\rho > 0$, $\varepsilon > 0$\;
    $k= 0$, $u^k = 0$, $s^k = 0$, $X^k = 0$, $Z^k = 0$\;
    Compute sparse Cholesky factorization of matrix $\B\B^T + I$\;
    \For{$k = 0, 1,\ldots$}{
    		Compute $y^{k+1} = -\frac{e}{\rho} + \diag(L +Z^k+ X^k/\rho)$ \;
    		Solve for $t^{k+1}$ by using cached factorization: $(\B\B^T + I)t^{k+1} = \B(L+Z^k+X^k/\rho) +u^k + \frac{s^k-e}{\rho}$\;
    		$v = t^{k+1} - s^k/\rho$\;
    		$M = L - \Diag(y^{k+1}) - \B^T(t^{k+1}) + X^k/\rho$\;
    		$Z^{k+1} = -M_-$\;
    		$u^{k+1} = v_+$\;
    		$X^{k+1} = \rho M_+$\;
    		$s^{k+1} = -\rho v_-$\;
    		Compute $\delta = \max\{r_P, r_D\}$\;
         \If{$\delta < \varepsilon$}{break\;}
		Update penalty parameter $\rho$ according to \eqref{update_ro}.
	}
	\emph{Post-processing:}\\
	$\hat{Z} = \Diag(y) + \B^T(t) - L$\;
	Compute the smallest eigenvalue $\lambda_{\min}(\hat{Z})$\;
	\If{ $\lambda_{\min}(\hat{Z}) \ge 0$}{return $e^Ty + e^Tu$\;}
	\Else{return $e^Ty + e^Tu - n\lambda_{\min}(\hat{Z}) $\;}	        
\end{algorithm}

\end{subsection}

\begin{subsection}{Convergence of the method }
It has been recently shown \cite{chen2016direct} that multi-block ADMM is not necessarily convergent. In the theorem presented below, we show that due to the special structure of operators in the case of semidefinite relaxation of Max-Cut,  we can achieve a convergent scheme by reducing it to a 2-block method. For the sake of completeness, we include the proof of convergence. 
We also note that Chen et~al.\ in \cite{chen2016direct} prove the same result in a more general setting.

\begin{theorem}
The sequence $\left\{ \left( X^k, s^k, y^k, t^k, Z^k, u^k \right)\right\}$ generated by Algorithm \ref{admm_alg} from any starting point $\left( X^0, s^0, y^0, t^0, Z^0, u^0 \right)$ converges to solutions $\left( X^*, s^*\right)$, $\left(y^*, t^*, Z^*, u^* \right)$ of the primal-dual pair of semidefinite programs \eqref{streng_SDP_slack} and \eqref{streng_SDP_slack_dual}. 
\end{theorem}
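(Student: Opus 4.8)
The plan is to recognise that, despite involving six variable updates, the scheme \eqref{y}--\eqref{s} is exactly a genuine \emph{two-block} ADMM of the form \eqref{admm2block}--\eqref{admm2block_iter} applied to the dual problem \eqref{streng_SDP_slack_dual}, and then to invoke the classical convergence theory for two-block ADMM. I would group the free variables $x_1 = (y,t)$ into the first block and the conic variables $x_2 = (Z,u)$ into the second, with the primal matrices $(X,s)$ playing the role of the ADMM multiplier. Writing the two equality constraints of \eqref{streng_SDP_slack_dual} as $A_1 x_1 + A_2 x_2 = b$ with $A_1(y,t) = (-\Diag(y)-\B^T(t),\,-t)$, $A_2(Z,u)=(Z,u)$ and $b=(-L,0)$, and taking $f_1(y,t)=e^Ty+e^Tt$ together with $f_2$ equal to the indicator of $\{Z\succeq 0\}\times\{u\ge 0\}$, the augmented Lagrangian $L_\rho$ and the multiplier updates \eqref{X}, \eqref{s} coincide with the standard iteration \eqref{admm2block_iter}.

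The crux --- and the step that distinguishes this instance from the possibly divergent multi-block schemes of \cite{chen2016direct} --- is to show that the sequential sweeps \emph{within} each block reproduce the joint block minimisations. This rests on the special structure of the hypermetric operator: because $\diag(X)=e$ is fixed, each constraint matrix may be taken with zero diagonal, so that $\diag(\B^T(t))=0$ for every $t$ (this is precisely why the $y$-update in Algorithm \ref{admm_alg} carries no $\B^T(t)$ term). Consequently the cross term $\langle \Diag(y),\B^T(t)\rangle$ vanishes and the block-$1$ augmented Lagrangian separates as $g(y)+h(t)$, so minimising first over $y$ and then over $t$ yields the joint minimiser over $(y,t)$. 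Similarly, $Z$ and $u$ enter $L_\rho$ only through the two distinct squared terms $\tfrac{\rho}{2}\Vert L-\Diag(y)-\B^T(t)+Z+X/\rho\Vert_F^2$ and $\tfrac{\rho}{2}\Vert u-t+s/\rho\Vert_2^2$, so the block-$2$ sweep is likewise a genuine joint minimisation. This identifies \eqref{y}--\eqref{s} as a bona fide two-block ADMM.

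With the reduction in hand, I would verify the hypotheses of the two-block convergence theorem (see \cite{boyd2011distributed}): $f_1$ is linear, hence closed and convex, $f_2$ is the indicator of a closed convex cone, and a saddle point of the ordinary Lagrangian exists because Slater's condition holds for both members of the pair. For \eqref{streng_SDP_slack} the point $X=I$, $s=e$ is strictly feasible, since $\B(I)=0$ by the zero-diagonal property and $I\succ 0$; for \eqref{streng_SDP_slack_dual} one makes $\Diag(y)$ sufficiently diagonally dominant and takes $u=t>0$, so that $Z=\Diag(y)+\B^T(t)-L\succ 0$. I would then run the standard Lyapunov argument with $V^k = \tfrac1\rho\big(\Vert X^k-X^*\Vert_F^2+\Vert s^k-s^*\Vert_2^2\big)+\rho\big(\Vert Z^k-Z^*\Vert_F^2+\Vert u^k-u^*\Vert_2^2\big)$, which is nonincreasing and drives the primal residuals and the successive differences to zero; passing to limit points of the resulting bounded sequence shows that the multiplier $(X^k,s^k)$ converges to an optimal $(X^*,s^*)$ and that $(Z^k,u^k)\to(Z^*,u^*)$.

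Finally I would upgrade residual convergence to full iterate convergence of the remaining block. Since the residual vanishes and $(Z^k,u^k)$ converges, the constraint forces $A_1(y^k,t^k)\to A_1(y^*,t^*)$; and $A_1$ is injective --- $A_1(y,t)=0$ gives $t=0$ from the second component and then $\Diag(y)=0$, i.e.\ $y=0$ --- so $(y^k,t^k)\to(y^*,t^*)$. Thus all six sequences converge to a primal--dual optimal pair for \eqref{streng_SDP_slack} and \eqref{streng_SDP_slack_dual}. I expect the main obstacle to be the rigorous justification of the two-block reduction, the separability step, since it is exactly the property that rescues convergence where a naive four-block ADMM would fail; the injectivity argument needed to obtain convergence of the iterates themselves, rather than merely of the residuals and the objective, is a secondary point that also requires care.
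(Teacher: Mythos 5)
Your proposal is correct and takes essentially the same route as the paper: the decisive observation in both is the orthogonality property $\diag(\B^T(t))=0$ and $\B(\Diag(y))=0$ of the zero-diagonal constraint matrices, which makes the sequential sweeps over $(y,t)$ and over $(Z,u)$ coincide with joint block minimisations and thereby reduces the six-variable scheme to a genuine two-block ADMM with $(X,s)$ as the multiplier. The only difference is that the paper then delegates the two-block convergence to the analysis of Wen et~al., whereas you additionally verify Slater's condition and sketch the Lyapunov and injectivity arguments needed to get convergence of all iterates; this fills in, correctly, what the paper handles by citation.
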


\begin{proof}
The convergence of our multi-block ADMM is guaranteed due to the orthogonality relations of the operators $\diag$ and $\B$ and their adjoints:
\begin{align}\label{orthogonality}
 \B\left(\Diag(y)\right) = 0, \ \ \diag\left(\B^T(t)\right) = 0, \ \textrm{ for any vectors } y \in  \mathbb{R}^n \textrm{ and }  t \in \mathbb{R}^m.
\end{align}
In this case the multi-block ADMM reduces to a special case of the original method \eqref{admm2block_iter}. To see this, note that orthogonality in \eqref{orthogonality} implies that the first order optimality conditions for variables $y$ and $t$ in \eqref{y} and \eqref{t} reduce to 
\begin{align*}
\nabla_y L_{\rho} &= e +\rho y - \rho \diag(L+ Z +X/ \rho) = 0\\
\nabla_t L_{\rho} &= e - s - \rho (u-t) - \rho \B(L - \B^T(t) + Z + X/\rho) = 0,
\end{align*}
meaning that $y$ and $t$ are independent and are jointly minimized by regarding $(y,t)$ as one variable. Similarly, update rules for variables $Z$ and $u$, as well as for primal pair $X$ and $s$, ensure that they can also be jointly minimized. By separately regarding $(Z,u)$ and $(X,s)$ as one variable, the iterate scheme \eqref{y}~--~\eqref{s} can be written as:
\begin{align*}
\left(y^{k+1},t^{k+1}\right) &= \argmin_{(y,t) \in \mathbb{R}^n \times \mathbb{R}^m} \ L_{\rho} \left( (y,t), (Z^k,u^k);(X^k,s^k) \right)\\[0.5em]
\left(Z^{k+1}, u^{k+1} \right) &= \argmin_{(Z,u) \in \mathcal{S}_n^+ \times \mathbb{R}^m_+} \ L_{\rho}\left( (y^{k+1},t^{k+1}),(Z,u);(X^k,s^k)\right) \\[0.5em]
\left(X^{k+1}, s^{k+1} \right) &=  \left(X^{k}, s^{k} \right) + \rho\left(L-\Diag(y^{k+1})-\B^T(t^{k+1}) + Z^{k+1}, u^{k+1}-t^{k+1} \right).
\end{align*}
Thus the convergence of Algorithm \ref{admm_alg} is implied by the analysis in \cite{wen2010alternating}, which looks at 2-block ADMM as a fixed point method. 
\end{proof}

\end{subsection}

\end{section}

\begin{section}{Branch and bound}\label{sec:BB}
The prominent bounds obtained by using the ADMM method for solving \eqref{streng_hyp_SDP} motivate us to use this method within a branch and bound framework. We call the outcome of this work the \texttt{MADAM} solver -- \texttt{M}ax-Cut \texttt{A}lternating \texttt{D}irection \texttt{A}ugmented Lagrangian \texttt{M}ethod.  

The Branch and Bound algorithm (B\&B) is one of the standard enumerative methods for computing  the global optimum of an NP-hard problem. It consists of the following ingredients:
\begin{itemize}
\item the bounding procedure, which provides an upper bound for the optimum value for each instance of a problem;
\item the branching procedure, which splits the current problem into more problems of smaller dimensions by fixing some variables;
\item a heuristic for generating a feasible solution providing a lower bound.
\end{itemize}

\paragraph{Bounding routine.}
The bounding routine of \texttt{MADAM} applies the ADMM algorithm, described in Algorithm \ref{admm_alg}, which for a given set of triangle, pentagonal, and heptagonal inequalities minimizes the dual function  \eqref{streng_SDP_slack_dual}. In order to obtain a tight upper bound,  we use the cutting-plane approach where multiple hypermetric inequalities are iteratively added and purged after each computation of the upper bound. First, the optimal solution of the basic semidefinite relaxation \eqref{basic_SDP} is computed using the interior-point method.  Then we separate the initial set of triangle inequalities. 
After the minimizer of problem \eqref{streng_SDP_slack_dual} is obtained, we purge all inactive constraints and new violated triangles are added. 
The problem with an updated set of inequalities is solved and the process is iterated as long as the decrease of the upper bound is sufficiently large. 
After separating the most violated triangle inequalities, we add pentagonal and heptagonal inequalities in order to further decrease the upper bound. We monitor the maximum violation of triangle inequalities $r_{\TRI}$ and as soon as the number is sufficiently small, we use the heuristic from Section 2 to add some strongly violated pentagonal inequalities to the relaxation. Similarly, as the maximum violation of pentagonal inequalities $r_{\PENT}$ drops below some threshold, new heptagonal inequalities are separated and added to the relaxation. In our numerical tests, we use the thresholds $r_{\TRI} < 0.2$ and $r_{\PENT} < 0.4$, and each time separate at most $10\cdot n$ violated triangle inequalities. 
For pentagonal and heptagonal inequalities, a different strategy is used. The initial number of these cutting planes is small, but each time the node is not pruned, this number is increased by 200.    
A description of our bounding routine is in Algorithm \ref{bounding_procedure}.

\begin{algorithm}[h]
	\footnotesize
	\caption{\small Semidefinite bounding procedure of \texttt{MADAM}}
	\label{bounding_procedure}
	solve the basic SDP relaxation \eqref{basic_SDP}\;
	separate initial set of triangle inequalities\;
	\While{upper bound decreases significantly}{
		use ADMM method to obtain approximate maximizer $X$ of \eqref{streng_hyp_SDP}\;
		remove hypermetric inequalities that are not active\;
		add new hypermetric inequalities that are violated\;
	}
\end{algorithm}

The warm-start strategy is the standard approach taken with ADMM methods to ensure faster convergence. After solving \eqref{streng_hyp_SDP} approximately at each iteration, the inactive hypermetric inequalities are removed and new hypermetric inequalities are added. This yields a new relaxed instance with a different set of inequality constraints. Thus, we can save the variables from the output of the ADMM at the $k$\nobreakdash-th iteration, and then use them to initialize the variables for the ADMM at the $(k+1)$\nobreakdash-th iteration. The previous ADMM iterates give a good enough approximation to result in far fewer iterations to compute the updates than if the method were started at zero or some other initial values. This is especially the case when the variables are near optimal and the method has almost converged. Furthermore, this strategy works particularly well in our case since only the variables $s$ and $u$ corresponding to cutting planes need to be adjusted to fit with the new operator $\B$ associated with a different set of inequality constraints. We keep the values $s_i$ and $u_i$ the same if the $i$-th inequality stays, otherwise we initialize the values with zero.

In order to improve the performance of \texttt{MADAM}, we can stop the bounding routine when we detect that we will not be able to prune the current B\&B node. BiqCrunch terminates the bounding routine if the difference between consecutive bounds is less than some prescribed parameter and if the gap between the upper and lower bounds is still large. In \texttt{MADAM}, we borrow an idea from BiqMac. After some cutting plane iterations, we make a linear forecast to decide whether it is worth doing more iterations. If the gap cannot be closed, we terminate the bounding routine, branch the current node, and start evaluating new subproblems.

\paragraph{Branching rules.} In \texttt{MADAM}, we use two branching strategies which are based on the approximate solution matrix $X$ obtained from Algorithm \ref{admm_alg}. Once the ADMM method terminates, the last column of $X$, with the exception of the last element, is extracted. Due to the diagonal constraint and positive semidefiniteness of feasible matrices of \eqref{streng_hyp_SDP}, all the entries lie in the interval $[-1,1]$. 
By using the transformation $z = \frac{1}{2}(x + e)$, a vector with entries in $[0,1]$ is obtained. We can follow different strategies based on the information we get from $z$.
Similarly to BiqCrunch and BiqBin, the decision on which variable $z_i$ to branch on is based on the following two strategies:
\begin{enumerate}[i]
\item most-fractional: we branch on the vertex $i$ for which the variable $z_i$ is closest to 0.5;
\item least-fractional: we branch on the vertex $i$ for which the variable $z_i$ is furthest from 0.5;
\end{enumerate}  

With this, we create two new subproblems, one where $z_i$ is fixed to 0, and the other where it is fixed to 1. The subproblems correspond to nodes in the B\&B tree and are added to the priority queue of unexplored problems. 
Priority is based on the upper bound  computed  by the ADMM method. When selecting the next subproblem, a node with the largest upper bound is evaluated first.

\paragraph{Generating feasible solutions.} For generating high quality feasible solutions of the Max-Cut problem, we use the optimum solution $X$ of \eqref{streng_hyp_SDP} in the Goemans-Williamson rounding hyperplane technique \cite{goemans1995improved}. By using factorization $X = V^\top V$ with column vectors $v_i$ of $V$ and selecting some random vector $r$, one side of the cut is obtained as 
$
\{i \mid v_i^\top r \ge 0 \}. 
$
The cut vector $x$ obtained from this heuristic is then further improved by flipping the vertices. Note that in our case the factorization of $X$ is already available from the bounding routine, which computes the eigenvalue decomposition of matrix $M$.  To summarize, for generating good cuts, we use the following scheme:
\begin{enumerate}[i]
\item Use the Goemans-Williamson rounding hyperplane technique to generate cut vector $x$ from minimizer of \eqref{streng_hyp_SDP}. 
\item The cut $x$ is locally improved by checking all possible moves of a single vertex to the opposite partition block. 
\end{enumerate}
The process is repeated several times with different random vectors $v$, due to its low computational cost (in practice we repeat it $n$ times, where $n$ is the size of the input graph).
Numerical experiments show that by using the proposed heuristic, the optimum solution is usually found already in the root node. 

\paragraph{Strategy for faster enumeration of the B\&B tree.} To obtain tight upper bounds on maximum cuts, we strengthen the basic SDP relaxation with a subset of hypermetric inequalities. Adding these cutting planes to the model and solving the relaxation in each B\&B node is computationally expensive and not necessary, especially if one can not prune the node. We use the strategy proposed in BiqBin, where we check (before including the hypermetric inequalities)  whether we will be able to prune the current node, or whether it is better to branch. 

Firstly, in the root node, we cache the bound $OPT_{\SDP}$ of the basic SDP relaxation \eqref{basic_SDP} and compute the bound $OPT_{\HYP}$ by iteratively adding violated hypermetric inequalities. Let
$$
\diff = OPT_{\SDP} - OPT_{\HYP}
$$
denote the difference between optimal values of both relaxations
and let $lb$ denote the current lower bound. 
Secondly, at all other nodes, we only compute the basic SDP relaxation. If the condition
\begin{align}
\label{diff}
OPT_{\SDP} \le lb + \diff + 1
\end{align}
is satisfied, meaning we are close enough to the lower bound, we estimate that by
adding the cutting planes to compute the tighter bound $OPT_{\HYP}$, we will prune the node. This idea helps efficiently traverse the B\&B tree, and we only invest time into the bounding routine when it is really needed. Numerical experiments show that overall, this strategy produces more B\&B nodes than necessary, but the performance of the algorithm is immensely improved, since in the first few levels of the B\&B tree, only the basic SDP relaxation needs to be computed.  

\end{section}

\begin{section}{Numerical results - serial algorithm}
We compare our serial version of \texttt{MADAM} with three state-of-the-art semidefinite based exact solvers for Max-Cut, BiqMac \cite{rendl2010solving}, BiqCrunch \cite{krislock2017biqcrunch}, and BiqBin \cite{BiqBin:2020}. All solvers are written in the C programming language. Several dense graphs available in the BiqMac library \cite{wiegele2007biq} with 100 vertices and integer edge weights are used to test the algorithms.

First, we take the pw05\_100.0 problem and plot the convergence curves for the bounding routines in the BiqCrunch, BiqMac, BiqBin, and \texttt{MADAM} solvers in the root nodes of the corresponding B\&B trees. Figure \ref{compare_bound} depicts the decrease of the dual function values in the course of the bound computation. Note that we have moved the BiqMac curve slightly to the right to make a better distinction between the BiqBin and the BiqMac curves, since they were strongly overlapping at the beginning. We also omit the first part of the BiqCrunch curve, since at the beginning, the penalty parameter $\alpha$ is large, and the bounding routine produces weak bounds. 

\begin{figure}[ht!]
	\centering
\includegraphics[scale=0.6]{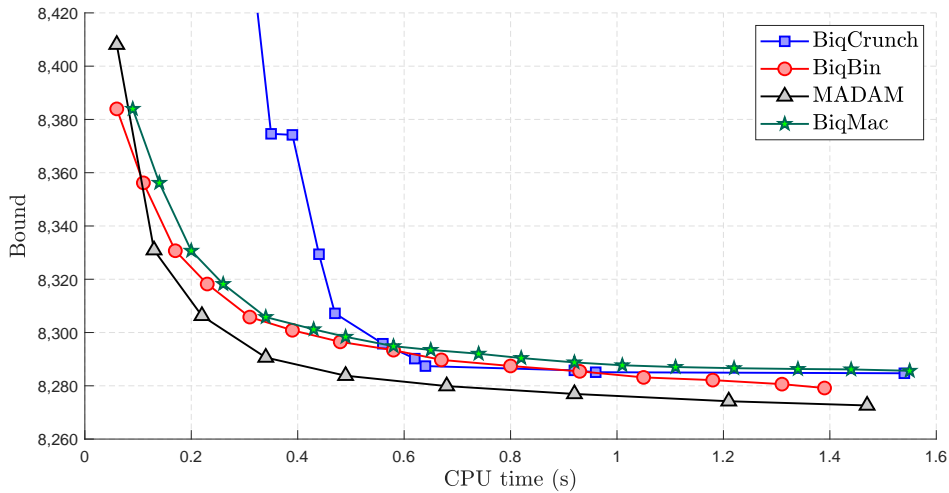}
\caption{Comparison of the bounding routine of BiqCrunch, BiqMac, BiqBin, and \texttt{MADAM} in the root node on the pw05\_100.0 problem.}
\label{compare_bound}
\end{figure}

The improvements of the serial version of \texttt{MADAM} over the other three solvers are due to using the SDP relaxation that produces a tighter upper bound, and in using the ADMM method instead of the bundle method or the quasi-Newton method. BiqMac and BiqCrunch use only triangle inequalities to strengthen the basic SDP relaxation, whereas BiqBin and \texttt{MADAM} also add  pentagonal and heptagonal inequalities. Consequently, the bound on the maximum cut is tighter. 
The exact minimizer of the nonsmooth partial dual function of BiqBin is difficult to reach using the bundle method. On the other hand, it is often the case that ADMM converges to modest accuracy (which is sufficient for our application) in a small number of iterations.
Although BiqBin and \texttt{MADAM} use the same SDP relaxation to compute the upper bound, there is a fundamental difference in the routine that separates new violated inequalities.
After using the ADMM method, we obtain the approximate maximizer $X$ of \eqref{streng_hyp_SDP}. This matrix is used to add new cutting planes to the relaxation. However, this information is not available from the bundle method. Instead, a convex combination of the bundle matrices is used as the input of the separation routine. For this reason, the \texttt{MADAM} solver is more successful in identifying promising cutting planes and consequently the upper bound is tighter.  
Furthermore, in our approach we reduce the complexity of ADMM by introducing a slack variable. Without it, the main cost in each iteration would consist of the projection onto the positive semidefinite cone, and solving a quadratic program over the nonnegative orthant with $m$ variables, where $m$ denotes the number of cutting planes. Instead of solving a quadratic program, we need to compute the solution of the sparse linear system and take the nonnegative part of the linear combination of the obtained vector and slack variable. This refinement further increases the performance of ADMM.

Next, we compare the running times of the solvers. Let $lb$ denote the best lower bound given by the heuristic, and $ub$ is the upper bound from the bounding procedure. Using the fact that we know that the optimal cut is an integer, we can prune the node of the branch and bound tree if the condition
\begin{equation*}
ub < lb + 1
\end{equation*}
is satisfied. For BiqMac, BiqCrunch, and BiqBin, we used their default settings. Each test instance is solved with both branching rules that the solvers provide. The one that produced the faster time of execution is used in the tables presenting the numerical results. For \texttt{MADAM}, the experiments show that the most-fractional rule dominates over the least-fractional rule, i.e.\ produces faster execution times for most of the graphs, and is therefore set as the default strategy.
We also run the algorithm with different penalty parameter values $\rho$ found in the literature \cite{wen2010alternating}. The value of the penalty parameter is set to $1.6$ for graphs from families g05 and pm1d, while for graphs from families w05, w09, pw05, and pw09 this value is decreased to $1$, since we have found this works well in our code. 
All the computations are done on a cluster consisting of 24-core nodes featuring two Intel Xeon E5-2680V3 running at 2.5 GHz with 64 GB of RAM. The code is compiled against OpenBLAS and LAPACK. 
In Tables \ref{g05} to \ref{pw0}, we report  the needed number of B\&B nodes and running time to solve the problem to optimality. Observe that our solver completely outperforms other approaches. For many instances, we significantly reduce the running time of the current best solver. The reason for efficiency is the simplicity and speed of the proposed bounding routine.  

\begin{table}[!htbp]
\footnotesize
\centering
\begin{tabular}{|c|r r|r r|r r|r r|r r|}
\bottomrule
&
  \multicolumn{2}{c|}{BiqMac} &
  \multicolumn{2}{c|}{BiqCrunch} &
  \multicolumn{2}{c|}{BiqBin} &
  \multicolumn{2}{c|}{\texttt{MADAM} - serial} \\ \cline{2-9} 
  \multirow{-2}{*}{Problem} &
  Time &
  Nodes &
  Time &
  Nodes &
  Time &
  Nodes &
  Time &
  Nodes \\ \toprule \bottomrule
g05\_100.0                       & 555.16  & 531  & 355.58        & 329  &166.93 & 269  & \textbf{98.33} & 195  \\ \hline
g05\_100.1                       & 3547.17 & 3643 & 1933.51       & 1751  & 955.21 & 964 &  \textbf{494.10} & 705 \\ \hline
g05\_100.2                       & 115.87  & 127  & 125.57        & 95   & 52.76 & 83 &  \textbf{40.07}  & 43  \\ \hline
g05\_100.3                       & 1308.85 & 1215 & 602.59        & 643  & 324.56& 655 &  \textbf{129.60} & 497  \\ \hline
g05\_100.4                       & 71.03   & 69   & 44.35         & 33   & 18.30& 15 &  \textbf{9.71}  & 11   \\ \hline
g05\_100.5                       & 116.16  & 129  & 111.12        & 109  & 37.37 & 49 &  \textbf{28.63}  & 31  \\ \hline
g05\_100.6                       & 177.22  & 193  & 139.52        & 99   & 51.05 & 67 &  \textbf{29.52}  & 47  \\ \hline
g05\_100.7                       & 332.35  & 337  & 282.33        & 329  & 145.12 & 121 &  \textbf{75.31} & 73  \\ \hline
g05\_100.8                       & 291.28  & 275  & 198.62        & 163  & 72.40 & 73 &  \textbf{35.78}  & 67  \\ \hline
g05\_100.9                       & 321.10  & 277  & 232.88        & 229  &104.52 & 149 &  \textbf{47.34} & 101  \\ \toprule
\end{tabular}
\caption{CPU times (s) and B\&B nodes to solve g05 problems.}
\label{g05}
\end{table}

\begin{table}[!htbp]
\footnotesize
\centering
\begin{tabular}{|c|r r|r r|r r|r r|}
\bottomrule
&
  \multicolumn{2}{c|}{BiqMac} &
  \multicolumn{2}{c|}{BiqCrunch} &
   \multicolumn{2}{c|}{BiqBin} &
  \multicolumn{2}{c|}{\texttt{MADAM} - serial} \\ \cline{2-9} 
  \multirow{-2}{*}{Problem} &
  Time &
  Nodes &
  Time &
  Nodes &
  Time &
  Nodes &
  Time &
  Nodes \\ \toprule \bottomrule
pm1d\_100.0  & 917.88  & 867  & 821.70  & 1011 &  317.62& 507 & \textbf{138.36} & 287  \\ \hline
pm1d\_100.1 & 2191.20 & 2113 & 1302.31 & 1073 & 510.81 & 611 & \textbf{267.08} & 383 \\ \hline
pm1d\_100.2  & 1458.67 & 1357 & 1003.45 & 837  & 369.62 & 707 & \textbf{185.09} & 585  \\ \hline
pm1d\_100.3  & 298.95  & 285  & 273.41  & 281  &103.10 & 141 & \textbf{51.05}  & 81  \\ \hline
pm1d\_100.4  & 1188.23 & 1113 & 756.02  & 607  & 294.30 & 473 & \textbf{142.01} & 329  \\ \hline
pm1d\_100.5  & 260.25  & 255  & 294.19  & 211  & 90.58 & 73 & \textbf{57.65}  & 61  \\ \hline
pm1d\_100.6  & 248.40  & 217  & 182.62  & 127  & 72.72 & 117 & \textbf{36.80}  & 59  \\ \hline
pm1d\_100.7  & 80.00   & 109  & 104.40  & 103  & 36.80& 29 & \textbf{25.80}  & 17   \\ \hline
pm1d\_100.8  & 107.25  & 97   & 57.18   & 39   & 21.66 & 23 & \textbf{12.79}  & 21   \\ \hline
pm1d\_100.9  & 382.85  & 347  & 294.32  & 243  & 112.32 & 135 & \textbf{55.55} & 97  \\ \toprule
\end{tabular}
\caption{CPU times (s) and B\&B nodes to solve pm problems.}
\end{table}

\begin{table}[!htbp]
\footnotesize
\centering
\begin{tabular}{|c|r r|r r|r r|r r|}
\bottomrule
&
  \multicolumn{2}{c|}{BiqMac} &
  \multicolumn{2}{c|}{BiqCrunch} &
   \multicolumn{2}{c|}{BiqBin} &
  \multicolumn{2}{c|}{\texttt{MADAM} - serial} \\ \cline{2-9} 
  \multirow{-2}{*}{Problem} &
  Time &
  Nodes &
  Time &
  Nodes &
  Time &
  Nodes &
  Time &
  Nodes \\ \toprule \bottomrule
w05\_100.0      & 735.17         & 667  & 524.39           & 401  & 194.08 & 367 & \textbf{114.82} & 171  \\ \hline
w05\_100.1      & 182.31         & 187  & 157.68           & 95   & 69.11 & 61 &  \textbf{41.82} & 31   \\ \hline
w05\_100.2      & 137.86         & 139  & 104.17           & 63   & 43.36 & 31 & \textbf{27.14}  & 23   \\ \hline
w05\_100.3     & 532.43         & 473  & 383.18           & 439  & 155.61 & 213 & \textbf{120.45} & 125  \\ \hline
w05\_100.4     & 921.26         & 849  & 564.00           & 383  & 240.62 & 273 & \textbf{168.03} & 191  \\ \hline
w05\_100.5     & 969.22         & 861  & 532.27           & 337  & 199.36 & 147 & \textbf{150.84} & 155  \\ \hline
w05\_100.6    & 185.23         & 165  & 120.55           & 81   & 53.42 & 67 & \textbf{41.32}  & 45   \\ \hline
w05\_100.7    & 138.66         & 117  & 81.11            & 41   & 39.22 & 23 & \textbf{28.79}  & 13   \\ \hline
w05\_100.8     & 799.63         & 693  & 514.31           & 387  & 194.68 & 237 & \textbf{150.44} & 193  \\ \hline
w05\_100.9     & 98.13          & 97   & 58.88            & 31   & 23.88 & 19 & \textbf{17.70}  & 9  \\  \toprule \bottomrule
w09\_100.0    & 474.91         & 419  & 393.72           & 255  & 129.50 & 143 &\textbf{119.88} & 109  \\ \hline
w09\_100.1     & 2538.66        & 2553 & 2447.42 & 1667 & 829.93 & 811 & \textbf{635.93}        & 641 \\ \hline
w09\_100.2     & 1043.74        & 973  & 787.44           & 531  & 289.20 & 449 & \textbf{222.66} & 311  \\ \hline
w09\_100.3     & 1242.63        & 1151 & 1098.87          & 819  & 336.90 & 619 & \textbf{263.81} & 435  \\ \hline
w09\_100.4    & 575.05         & 541  & 575.57           & 361  & 194.62 & 183 & \textbf{138.72} & 155 \\ \hline
w09\_100.5    & 35.84          & 17   & 17.29            & 7    & \textbf{9.86} & 7 & 10.64  & 1    \\ \hline
w09\_100.6     &81.73 & 101  & 110.93           & 53   & \textbf{27.36} & 23 & 35.35           & 15   \\ \hline
w09\_100.7    & 318.65         & 269  & 287.99           & 187  & 87.26 & 83 & \textbf{76.59} & 75  \\ \hline
w09\_100.8    & 174.95         & 199  & 188.65           & 89   & 87.18 & 51 &\textbf{76.31} & 83   \\ \hline
w09\_100.9    & 504.25         & 449  & 358.77           & 263  & 119.36 & 149 & \textbf{113.38} & 207  \\ \toprule
\end{tabular}
\caption{CPU times (s) and B\&B nodes to solve w problems.}
\end{table}

\begin{table}[!htbp]
\footnotesize
\centering
\begin{tabular}{|c|r r|r r|r r|r r|}
\bottomrule
&
  \multicolumn{2}{c|}{BiqMac} &
  \multicolumn{2}{c|}{BiqCrunch} &
  \multicolumn{2}{c|}{BiqBin} &
  \multicolumn{2}{c|}{\texttt{MADAM} - serial} \\ \cline{2-9} 
  \multirow{-2}{*}{Problem} &
  Time &
  Nodes &
  Time &
  Nodes &
  Time &
  Nodes &
  Time &
  Nodes \\ \toprule \bottomrule
 pw05\_100.0   & 1657.05 & 1553 & 1155.51       & 999 & 487.22 & 805 & \textbf{274.04} & 629 \\ \hline
pw05\_100.1   & 481.93  & 427  & 384.57        & 327 & 162.50 & 243 & \textbf{101.95} & 145  \\ \hline
pw05\_100.2   & 709.90  & 687  & 454.90        & 359 & 194.99 & 269 & \textbf{125.52} & 203  \\ \hline
pw05\_100.3   & 132.51  & 121  & 129.83        & 93  & 40.96 & 47 & \textbf{30.94}  & 33  \\ \hline
pw05\_100.4   & 793.85  & 777  & 585.08        & 411 & 222.35 & 263 & \textbf{133.66} & 179  \\ \hline
pw05\_100.5   & 190.30  & 175  & 123.76        & 85  & 51.56 & 73 & \textbf{33.14}  & 47   \\ \hline
pw05\_100.6   & 1401.88 & 1385 & 976.62        & 711 & 312.49 & 355 & \textbf{217.01} & 269  \\ \hline
pw05\_100.7   & 386.94  & 357  & 276.98        & 179 & 86.87 & 71 & \textbf{57.71} & 45  \\ \hline
pw05\_100.8   & 98.62   & 121  & 101.21        & 49  & 25.15 & 17 & \textbf{23.05}  & 13   \\ \hline
pw05\_100.9   & 323.26  & 307  & 321.72        & 327 & 113.98 & 129 & \textbf{65.91} & 97  \\  \toprule \bottomrule
pw09\_100.0  & 579.02  & 527  & 457.12        & 323 &  171.56 & 227& \textbf{107.15} & 163  \\ \hline
pw09\_100.1  & 821.02  & 753  & 706.71        & 543 & 248.79 & 311 & \textbf{153.76} & 231  \\ \hline
pw09\_100.2  & 292.42  & 275  & 248.28        & 175 & 88.34 & 109 & \textbf{46.98} & 127  \\ \hline
pw09\_100.3  & 153.66  & 153  & 170.37        & 113 & 52.44 & 39 & \textbf{34.71} & 27 \\ \hline
pw09\_100.4  & 394.53  & 373  & 310.72        & 213 & 109.10 & 147 & \textbf{73.68} & 93  \\ \hline
pw09\_100.5  & 489.02  & 519  & 513.91        & 321 & 295.02 & 145 & \textbf{129.74} & 111  \\ \hline
pw09\_100.6  & 390.50  & 337  & 278.82        & 241 & 113.95 & 131 & \textbf{72.65} & 77  \\ \hline
pw09\_100.7  & 1010.22 & 963  & 824.16        & 589 & 271.59 & 293 & \textbf{182.11} & 193  \\ \hline
pw09\_100.8  & 305.12  & 295  & 287.91        & 171 & 134.71 & 75 & \textbf{86.93} & 59  \\ \hline
pw09\_100.9  & 260.41  & 255  & 193.98        & 119 & 72.06 & 69 & \textbf{68.10} & 57  \\ \toprule
\end{tabular}
\caption{CPU times (s) and B\&B nodes to solve pw problems.}
\label{pw0}
\end{table}

\end{section}

\begin{section}{Parallel algorithm}
In this section, we describe how the algorithmic ingredients from the serial B\&B algorithm are combined into a parallel solver which utilizes the distributive memory parallelism. 
OpenMP (Open Multi-Processing) and MPI (Message Passing Interface) are two widely used paradigms available for parallel computing \cite{dagum1998openmp, Forum:1994:MMI:898758}. 
Algorithms based on B\&B can be parallelized on different levels. For example, shared memory parallelization using OpenMP can be applied to speed up the computation of bounds, and secondly, exploration of different branches of the B\&B tree can be done in parallel using the distributive scheme. In this case, several workers, each with its own memory, are evaluating B\&B nodes concurrently and using MPI to share important information via messages.
This is the approach we have taken. 

The load coordinator--worker paradigm with distributed work pools is applied, in which one process becomes the master process carefully managing the status of each worker, while the remaining processes concurrently explore the branches of the B\&B tree. Each worker has its own local priority queue of subproblems and the work is shared when one of them becomes idle. 
The master process keeps track of the status of each worker and acts as a load coordinator receiving messages and, based on their content, acts in the appropriate manner. In order to distinguish between different workers, each process is given a fundamental identifier, called rank of the worker. 

At the start of the algorithm, the master process reads and broadcasts the graph instance to the workers.
It is important that every process has the knowledge of the original graph, 
since construction of subproblems via branching or via received MPI messages is done based on this information. 
All the data about the B\&B nodes is encoded as an MPI structure, which is used in communication between different workers in order to efficiently exchange and construct the subproblems. 

Next, the load coordinator evaluates the root node and distributes the current best lower bound. After the bounding step, two new subproblems are generated, which are then sent to the first two idle processes. Afterwards, the load coordinator monitors the status of workers (idle or busy), counts the number of B\&B nodes, sends the ranks of free workers, and distributes the best solution found thus far.

After the initialization phase, the load coordinator waits for different types of messages sent by the workers. Firstly, if the worker's local queue is empty, the message  	
is sent informing the master that the process is idle and it can receive further work. Secondly, during the algorithm, working processes generate multiple candidates for optimal solutions. The master node keeps track of the current best optimal value and solution. When a new solution is received, the value is compared, updated if necessary, and distributed back during the communication phase. And thirdly, the master process helps the computing processes with work sharing by sending messages containing the ranks of idle workers. After a worker computes the lower and upper bounds, it compares these values to see if this branch of the B\&B tree can be safely pruned or if further branching is needed, and the construction of new subproblems takes place. 
In the latter case, a request message is sent to the load coordinator, asking for idle processes to share with one of the new generated subproblems and/or subproblems left in the queue from previous branching processes. 
If no idle worker is available, the generated subproblems remain in the worker's queue and the work continues locally. 
Otherwise, subproblems are encoded and sent to available idle workers. This is also where exchange of the current best lower bound happens.

When all the workers become idle (all local queues are empty), the master process sends a message to finish, and the algorithm terminates. 

Lastly, we elaborate on how efficient distribution of the subproblems to the workers is achieved and how the idle time is reduced. 
If the number of available workers is large, we need to reach a certain depth of the B\&B  tree in order for the processes to receive some work. Until this happens in the algorithm, workers are idle and resources are wasted. To fully  exploit all the  available resources,  we need a strategy for the worker processes  to start evaluating the nodes of the B\&B tree as soon as possible. This is again where the strategy using the variable $\diff$ from Section 5 benefits the algorithm. 
After the computation of the upper bound in the root node, the load coordinator distributes the value $\diff$ to the workers. When the first two idle processes evaluate the generated subproblems, the value of the basic SDP relaxation is typically such that the condition \eqref{diff} is not satisfied. Thus, on the first few levels of the B\&B tree, the workers compute only the basic SDP bound to faster evaluate the node, and idle processes quickly receive the generated subproblems.

\begin{subsection}{Numerical results - parallel algorithm}
We present the numerical results comparing the serial with the parallel version of the \texttt{MADAM} solver. As a Message Passing Interface library, we use Open MPI to exchange the data between the computing cores. Among the graphs that are used to test the sequential algorithm, we consider only those for which the solver needs more than 200 seconds to compute the optimal solution. We solve the same problem with increasing number of cores and compare the times against the serial algorithm. The results are reported in Table \ref{parallel_results}.

For the hardest instance w09\_100.1 for the serial solver, we also present the scalability plot to demonstrate the efficiency of the parallel version. Due to the increased number of workers, the computational time drops. However, the larger number of workers also increases communication costs since multiple request are sent to the load coordinator. This is shown in Figure \ref{scalability} as the curve diverges from the ideal scaling curve.  


\begin{table}[!htbp]
\footnotesize
\centering
\begin{tabular}{|c|c| r r r r r|}
\bottomrule
&
&
  \multicolumn{5}{c|}{\texttt{MADAM} - parallel}\\ \cline{3-7} 
   &
   &
   \multicolumn{5}{c|}{Number of workers}\\ \cline{3-7} 

    \multirow{-3}{*}{Problem} &
  \multirow{-3}{*}{\texttt{MADAM} - serial} &
  \multicolumn{1}{c}{3} &
  \multicolumn{1}{c}{6} &
  \multicolumn{1}{c}{12} &
  \multicolumn{1}{c}{24} &
  \multicolumn{1}{c|}{48} 
  \\ \toprule \bottomrule
  g05\_100.1  &  494.10   &  206.37  &     109.97     & 68.53   &  37.25  &  \textbf{23.85}   \\ \hline
  pm1d\_100.1 &  267.08  & 100.23   &    53.51      &   30.98 & 17.53  &   \textbf{10.92}  \\ \hline
w09\_100.1 &  635.93  &  226.02  &    125.19      &  72.43  &  41.37  &  \textbf{26.46} \\ \hline
w09\_100.2 &   222.66  &  75.64   &     43.17     &  26.80  &  14.64  &  \textbf{11.02}  \\ \hline
w09\_100.3 &  263.81  &  106.76  &      57.96    &  34.42  &  19.75  &   \textbf{15.17}  \\ \hline
pw05\_100.0 &  274.04  & 95.95   &    50.59      & 29.16  &  17.09 &  \textbf{10.71}  \\ \hline
pw05\_100.6 &  217.01   &  84.11  &    51.39     &  26.36  &  18.97  &  \textbf{15.46}  \\ \toprule
\end{tabular}
\caption{Numerical results obtained with parallel version of \texttt{MADAM}.}
\label{parallel_results}
\end{table}

\begin{figure}[!htbp]
\centering
\begin{subfigure}{.5\textwidth}
  \centering
  \includegraphics[width=1\linewidth]{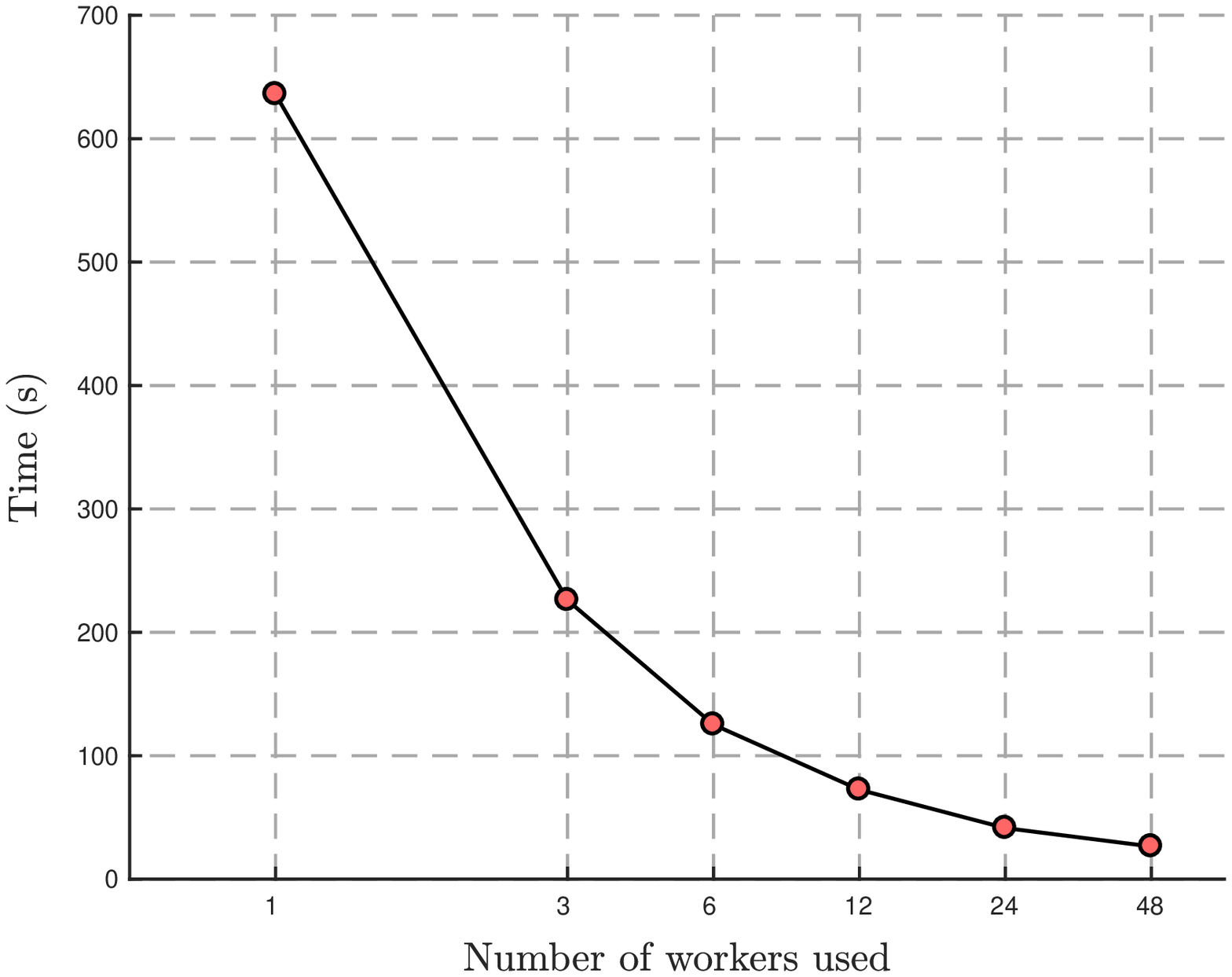}
\end{subfigure}%
\begin{subfigure}{.5\textwidth}
  \centering
  \includegraphics[width=1\linewidth]{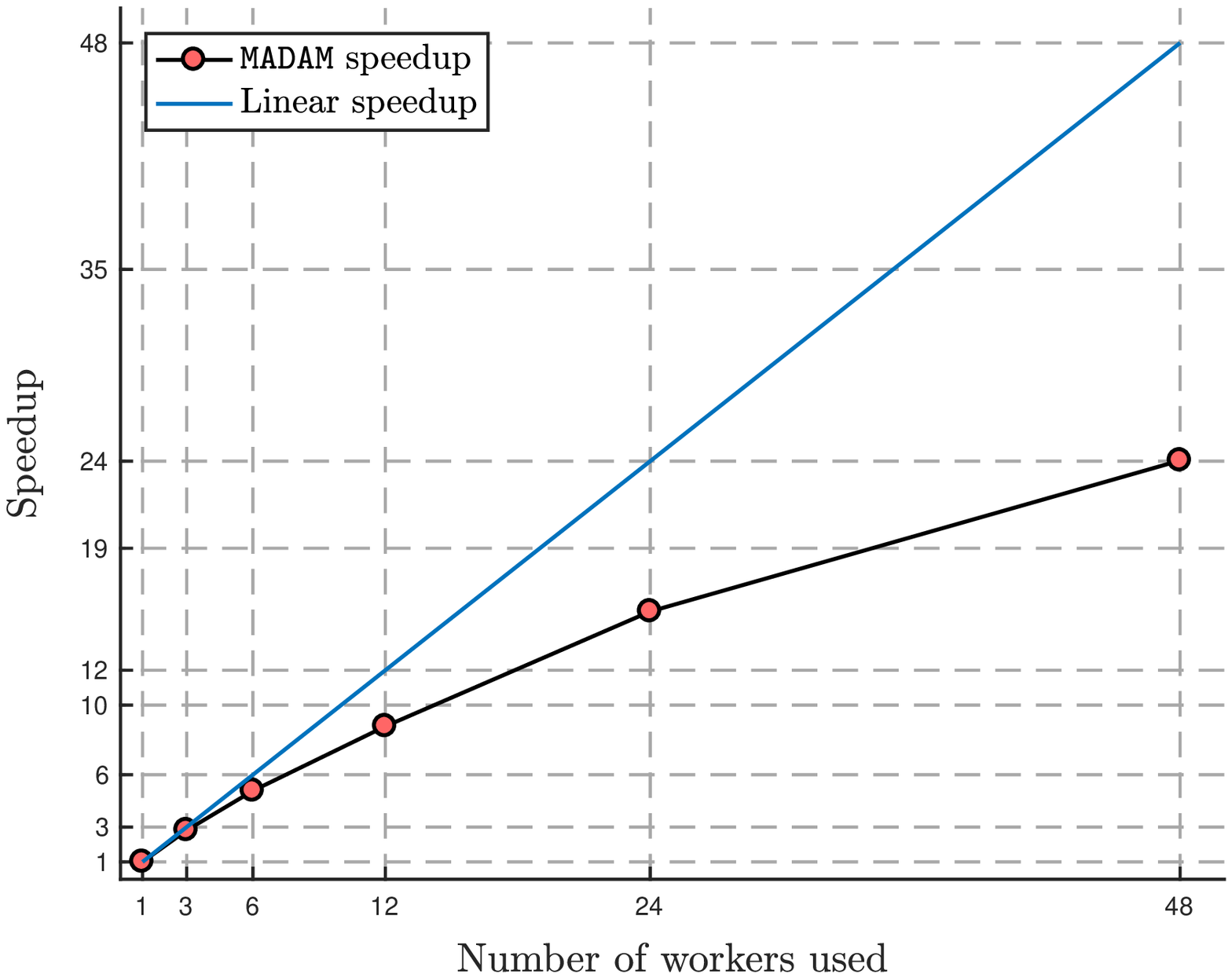}
\end{subfigure}
\caption{The plots depict scalability of \texttt{MADAM} solver for the w09\_100.1 problem. Due to increased number of workers, the computational
	time drops. However, larger number of workers also increases communication costs, which results in the divergence of the curve 
	from the ideal scaling curve (right).}
\label{scalability}
\end{figure}

Next, we use \verb|rudy| \cite{rinaldirudy}, a machine independent graph generator,  to construct new Max-Cut instances with 180 vertices, similar to the ones in the BiqMac library. The parameters for generating the graphs can be found in \cite{wiegele2006nonlinear} in Appendix B. 
Table \ref{madam_180} presents numerical results  obtained by applying the parallel solver using 240 cores for all instances. We report the maximum cut values and the number of B\&B nodes needed to solve the problems to optimality. 

\begin{table}[!htbp]
\begin{minipage}{.5\linewidth}
\footnotesize
\centering
\begin{tabular}{|c|c|r r|}
\bottomrule
&
  \multicolumn{3}{c|}{\texttt{MADAM} - parallel} \\ \cline{2-4} 
  \multirow{-2}{*}{Problem} &
  Solution &
  Time &
  Nodes \\ \toprule \bottomrule
g05\_180.0     &       4507       &  \textbf{671.23} &  148617 \\ \hline
g05\_180.1       &       4500       &   \textbf{670.63} & 137665 \\ \hline
g05\_180.2       & 4503               &  \textbf{1116.24}  & 281215  \\ \hline
g05\_180.3        & 4495               & \textbf{3706.61} & 786457  \\ \hline
g05\_180.4         & 4492         & \textbf{5209.59}  & 1556485  \\ \hline
g05\_180.5          &4488             & \textbf{8964.00}  & 2333997  \\ \hline
g05\_180.6          & 4489            & \textbf{10542.41}  & 2298681  \\ \hline
g05\_180.7        &4490             & \textbf{8952.15} & 2168339  \\ \hline
g05\_180.8         & 4490            & \textbf{7431.42}  & 2413043  \\ \hline
g05\_180.9           &4492           & \textbf{4092.88}  & 1083549  \\ \toprule
\end{tabular}
\end{minipage}%
\begin{minipage}{.5\linewidth}
\footnotesize
\centering
\begin{tabular}{|c|c|r r|}
\bottomrule
&
  \multicolumn{3}{c|}{\texttt{MADAM} - parallel} \\ \cline{2-4} 
  \multirow{-2}{*}{Problem} &
  Solution &
  Time &
  Nodes \\ \toprule \bottomrule
pm1d\_180.0          & 884         &  \textbf{9907.77} & 1227367  \\ \hline
pm1d\_180.1            & 930       &   \textbf{1098.64} & 164965 \\ \hline
pm1d\_180.2            & 865          &  \textbf{374.01}  & 54539  \\ \hline
pm1d\_180.3            &866           & \textbf{442.40} &  73171  \\ \hline
pm1d\_180.4       & 843         & \textbf{1567.66}  & 215171   \\ \hline
pm1d\_180.5             & 785         & \textbf{7697.99}  & 1916281  \\ \hline
pm1d\_180.6                & 747      & \textbf{963.60}  & 206105  \\ \hline
pm1d\_180.7          &  933        & \textbf{631.29} & 49671  \\ \hline
pm1d\_180.8        &  902          & \textbf{5146.23}  & 958197 \\ \hline
pm1d\_180.9        & 950             & \textbf{3892.91}  & 1377653  \\ \toprule
\end{tabular}
\end{minipage}%
\end{table}

\begin{table}[!htbp]
\begin{minipage}{.5\linewidth}
\footnotesize
\centering
\begin{tabular}{|c|c|r r|}
\bottomrule
&
  \multicolumn{3}{c|}{\texttt{MADAM} - parallel} \\ \cline{2-4} 
  \multirow{-2}{*}{Problem} &
  Solution &
  Time &
  Nodes \\ \toprule \bottomrule
  w05\_180.0     &  4126            &  \textbf{4615.81} &  243631 \\ \hline
w05\_180.1       &  4160           &   \textbf{19464.53} & 2826185 \\ \hline
w05\_180.2       &  3697             &  \textbf{2708.77}  & 377159   \\ \hline
w05\_180.3        &   3718            & \textbf{1894.69} & 235077  \\ \hline
w05\_180.4         &  3777       & \textbf{6718.74}  &830525 \\ \hline
w05\_180.5          &  3883          & \textbf{4640.78}  & 534895  \\ \hline
w05\_180.6          & 4233           & \textbf{10021.08}  & 1806407 \\ \hline
w05\_180.7        & 3830           & \textbf{9354.87} & 1178125  \\ \hline
w05\_180.8         &   3801          & \textbf{4440.65}  & 550265  \\ \hline
w05\_180.9           &   3906        & \textbf{3715.36}  & 576693  \\ \toprule
\end{tabular}
\end{minipage}%
\begin{minipage}{.5\linewidth}
\footnotesize
\centering
\begin{tabular}{|c|c|r r|}
\bottomrule
&
  \multicolumn{3}{c|}{\texttt{MADAM} - parallel} \\ \cline{2-4} 
  \multirow{-2}{*}{Problem} &
  Solution &
  Time &
  Nodes \\ \toprule \bottomrule
w09\_180.0          &   5550       &  \textbf{13585.11} & 1204445  \\ \hline
w09\_180.1            &  5397      &   \textbf{8165.05} & 905925\\ \hline
w09\_180.2            & 4710          &  \textbf{6399.73}  & 613239  \\ \hline
w09\_180.3            &   5003        & \textbf{16577.92} &  1781313 \\ \hline
w09\_180.4       &  5267       & \textbf{8915.48}  & 1920259  \\ \hline
w09\_180.5             & 4977          & \textbf{3402.87}  & 237069  \\ \hline
w09\_180.6                & 5744      & \textbf{7312.46}  & 954457 \\ \hline
w09\_180.7          & 5388       & \textbf{33339.01} & 4722991  \\ \hline
w09\_180.8        & 4907       & \textbf{52024.30}  & 12930415  \\ \hline
w09\_180.9        &  4735            & \textbf{14737.37}  & 1777259  \\ \toprule
\end{tabular}
\end{minipage}%
\end{table}

\begin{table}[!htbp]
\begin{minipage}{.5\linewidth}
\footnotesize
\centering
\begin{tabular}{|c|c|r r|}
\bottomrule
&
  \multicolumn{3}{c|}{\texttt{MADAM} - parallel} \\ \cline{2-4} 
  \multirow{-2}{*}{Problem} &
  Solution &
  Time &
  Nodes \\ \toprule \bottomrule
pw05\_180.0     &    25536         &  \textbf{22309.20} &  3324025 \\ \hline
pw05\_180.1       &   25301          &   \textbf{1111.58} & 146393 \\ \hline
pw05\_180.2       &   25260          &  \textbf{6354.55}  & 1036951   \\ \hline
pw05\_180.3        &  25514               & \textbf{411.39} & 30617  \\ \hline
pw05\_180.4         &   25128       & \textbf{9534.08}  & 1185213 \\ \hline
pw05\_180.5          &  25338          & \textbf{4751.92}  & 481553  \\ \hline
pw05\_180.6          &  25200        & \textbf{20387.66}  & 3626241 \\ \hline
pw05\_180.7        &  25373         & \textbf{707.48} & 131639  \\ \hline
pw05\_180.8         &  25215           & \textbf{3118.33}  & 395689  \\ \hline
pw05\_180.9           &  25608         & \textbf{20099.60}  & 2309945  \\ \toprule
\end{tabular}
\end{minipage}%
\begin{minipage}{.5\linewidth}
\footnotesize
\centering
\begin{tabular}{|c|c|r r|}
\bottomrule
&
  \multicolumn{3}{c|}{\texttt{MADAM} - parallel} \\ \cline{2-4} 
  \multirow{-2}{*}{Problem} &
  Solution &
  Time &
  Nodes \\ \toprule \bottomrule
pw09\_180.0          &   43070       &  \textbf{9750.36} & 2214985	  \\ \hline
pw09\_180.1            &   42820     &   \textbf{2441.80} & 462807 \\ \hline
pw09\_180.2            &   42736        &  \textbf{2820.30}  & 195949  \\ \hline
pw09\_180.3            &   42880         & \textbf{3489.30} &  473039 \\ \hline
pw09\_180.4       &  42784     & \textbf{1619.40}  & 197847  \\ \hline
pw09\_180.5             & 42969         & \textbf{9087.99}  & 748497  \\ \hline
pw09\_180.6                & 42934      & \textbf{5775.62}  &729985 \\ \hline
pw09\_180.7          &   43210      & \textbf{13250.48} & 2430053  \\ \hline
pw09\_180.8        &   42806        & \textbf{26826.61}  &  4179147 \\ \hline
pw09\_180.9        &   43438           & \textbf{322.01}  & 39889  \\ \toprule
\end{tabular}
\end{minipage}%
\caption{Numerical results obtained with the parallel version of \texttt{MADAM} for graphs with 180 vertices.}
\label{madam_180}
\end{table}

\end{subsection}

\end{section}

\begin{section}{Conclusions and future work}
In this paper, we have presented an efficient exact solver \texttt{MADAM} for the Max-Cut problem that applies the alternating direction method of multipliers to efficiently compute high-quality SDP-based upper bounds. 
This is due to the small computational complexity per iteration of the proposed method, since it essentially consists of solving one sparse system of linear equations and projection onto the nonnegative orthant and positive semidefinite cone. Furthermore, tight upper bounds are obtained due to the inclusion of a subset of hypermetric inequalities.  Numerical results show that the {\texttt{MADAM}} solver outperforms state-of-the-art approaches. 
By combining the elements of the serial algorithm and the new strategy for a faster exploration of the B\&B tree, we have developed a parallel solver based on MPI that utilizes the load coordinator-worker paradigm. We have shown that the {\texttt{MADAM}} solver scales very well and that we can greatly reduce the time needed to solve the Max-Cut problem to optimality and increase the size of instances that can be solved in a routine way. 

In the future, we intend to increase the performance of the algorithm by using one-sided MPI communication, thus enabling a complete removal of the master process from the algorithm and a more efficient exchange of the messages.  
We will also further improve the performance of \texttt{MADAM} by passing cutting planes and the output of the bounding routine from parent to children nodes. We believe that this will help accelerate the ADMM method and the search for prominent inequalities.
\end{section}

\section*{Acknowledgements}
The authors would like to thank the anonymous referees for their careful reading of the paper and for their constructive comments, which are greatly appreciated. This work was supported by the Slovenian Research Agency: projects N1-0057, N1-0071, J1-1691, J1-8130,  J1-8132. 
The first author also acknowledges the financial support from the Slovenian Research Agency under the program for young researchers (MR+).

\footnotesize{
\bibliography{reference} 
\bibliographystyle{plain}
}

\end{document}